\pgfplotsset{compat=1.12,axis lines=center}
\def\ps@pprintTitle{%
	\let\@oddhead\@empty
	\let\@evenhead\@empty
	\let\@oddfoot\@empty
	\let\@evenfoot\@oddfoot
}
\newtheorem*{theoremaux}{Theorem \theoremauxnum}
\gdef\theoremauxnum{1}
	\newtheorem{lemma}{\bf Lemma}[section]
	\newtheorem{theorem}{\bf Theorem}[section]
	\newtheorem{proposition}[lemma]{\bf Proposition}
	\newtheorem{corollary}[lemma]{\bf Corollary}
	\newtheorem{definition}{\bf Definition}[section]
\journal{~}
\begin{document}

	\begin{frontmatter}
		
		
		
		\title{On the strong domination number of proper enhanced power graphs of finite groups}

		
		
		\author{Sudip Bera}
		\ead{sudip\_bera@daiict.ac.in} 
		\address{Faculty of Mathematics,\\ 
		DA-IICT, Gandhinagr \\ India} 
		%
		
		
		\begin{abstract}
			\noindent 
			 The enhanced power graph of a group $G$ is a graph with vertex set $G$, where two distinct vertices $x$ and $y$ are adjacent if and only if there exists an element $w$ in $G$ such that both $x$ and $y$ are powers of $w$. To obtain the proper enhanced power graph, we consider the induced subgraph on the set $G \setminus D$, where $D$ represents the set of dominating vertices in the enhanced power graph. In this paper, we aim to determine the strong domination number of the proper enhanced power graphs of finite nilpotent groups.
		\end{abstract}
		
		\begin{keyword}
			proper enhanced power graph \sep  nilpotent group \sep strong domination number 
			
			\medskip  
			
			\MSC[2020] 05C25 \sep 20D10 
			
		\end{keyword}
		
	\end{frontmatter}
\section{Introduction}
The examination of graphs associated with various algebraic structures has gained significance in the past twenty years. The study of these graphs offers multiple advantages. Firstly, it allows us to categorize the resulting graphs. Secondly, it enables us to identify algebraic structures that possess isomorphic graphs. Lastly, it helps us understand the interdependence between algebraic structures and their corresponding graphs. Additionally, these graphs have numerous valuable applications, as evidenced by sources such as \cite{surveypwrgraphkac1, cayleygraphsckry}. Moreover, they are closely linked to automata theory \cite{automatatheory}. There exist various types of graphs, including but not limited to commuting graphs of groups \cite{ijraeljofmathematics, braurflower, onboundingdiamcommuting}, power graphs of semigroups \cite{undpwrgraphofsemgmainsgc1, directedgrphcompropofsemgrpkq3}, groups \cite{combinatorialpropertyandpowergraphsofgroupskq1}, intersection power graphs of groups \cite{intersectionpwegraphb3}, enhanced power graphs of groups \cite{firstenhcedpwrstrctreaacns1,enhancedpwrgrapbb3,Jitendra-Ma-acta-enhaced}, and comaximal subgroup graphs \cite{das-saha-saba}. These graphs have been established to explore the properties of algebraic structures using graph theory.

The commuting graph, which is one of the most significant and extensively studied graphs, is associated with a group $G$. This graph has been thoroughly examined in \cite{ijraeljofmathematics, braurflower, onboundingdiamcommuting}. In order to provide a formal definition of the commuting graph, we refer to the following:

\begin{definition} [\cite{ijraeljofmathematics, braurflower, onboundingdiamcommuting}]
	Consider a group $G$. The commuting graph of $G$, denoted as $\mathcal{C}(G)$, is a simple graph where the vertex set consists of non-central elements of $G$. Two distinct vertices $u$ and $v$ are connected in this graph if and only if $u$ and $v$ commute, meaning $uv=vu$.
\end{definition}
Furthermore, Segev and Seitz \cite{Segev-Seitz-Pacific} utilized combinatorial parameters of specific commuting graphs to establish conjectures in the theory of division algebras. Additionally, a variant of commuting graphs on groups has played a crucial role in the classification of finite simple groups, as mentioned in \cite{finite-smple-group-book}.

Around the year 2000, Kelarev and Quin \cite{combinatorialpropertyandpowergraphsofgroupskq1} introduced the concept of a power graph in the context of semigroup theory.
\begin{definition}[\cite{surveypwrgraphkac1,undpwrgraphofsemgmainsgc1, combinatorialpropertyandpowergraphsofgroupskq1}]\label{defn: powr graph}
	Given a group $G,$ the \emph{power graph} $\mathcal{P}(G)$ of $G$ is a simple graph with vertex set $G$ and two vertices $u$ and $v$ are connected by an edge if and only if one of them is the power of another.  
\end{definition}
A new graph known as the \emph{enhanced power graph} was introduced by Aalipour et al. \cite{firstenhcedpwrstrctreaacns1} to assess the proximity between the power graph and the commuting graph of a group G. Following this, the closeness of the power graph to the commuting graph can be measured.
\begin{definition}[\cite{firstenhcedpwrstrctreaacns1}]\label{defn: enhcdpowr graph}
	Let $G$ be a group. The \emph{enhanced power graph} of $G,$ denoted by $\mathcal{G}_E(G),$ is the graph with vertex set $G,$ in which two vertices $u$ and $v$ are joined if and only if there exists an element $w \in G$ such that both $u \in \langle w \rangle $ and $v \in  \langle w \rangle.$   
\end{definition}
Numerous studies have recently been conducted to examine various characteristics of the enhanced power graph of finite groups. In \cite{firstenhcedpwrstrctreaacns1}, the authors provided a characterization of finite groups in which any arbitrary pair of the power, commuting, and enhanced graphs are identical. Additionally, \cite{Zahirovienhnacedpwrgraph} presented a proof that finite groups with isomorphic enhanced power graphs also have isomorphic directed power graphs. The work by Bera et al. in \cite{enhancedpwrgrapbb3} focused on investigating the completeness, dominatability, and other intriguing properties of the enhanced power graph. Ma and She, in \cite{ma-she}, derived the metric dimension, while Hamzeh et al., in \cite{Hamzeh-ashrafi}, determined the automorphism groups of enhanced power graphs of finite groups. Recently, Bera et al., in \cite{bera-dey-JGT}, explored the connectivity, dominatibility, diameter, and Laplacian spectral radius of the proper enhanced power graphs (defined later) of groups. The main focus of our paper is the strong domination number of the proper enhanced power graphs of groups.
\subsection{Basic definitions and notations}\label{subsection:definitions}
For the convenience of the reader and also for later use, we recall some basic definitions and notations about graphs. Let $\Gamma=(V, E)$ be a simple undirected graph with vertex set $V$ and edge set $E.$ Two elements $u$ and $v$ are said to be adjacent if there is an edge between them. For a vertex $u,$ we denote by $\text{nbd}(u)$ the set of vertices which are adjacent to $u.$ A \emph{path} of length $k$ between two vertices $v_0$ and $v_k$ is an alternating sequence of vertices and edges $v_0, e_0, v_1, e_1, v_2, \cdots , v_{k-1}, e_{k-1}, v_k$, where the $v_i'$s are distinct (except possibly the first and last vertices) and $e_i'$s are the edges $(v_i, v_{i+1}).$  A graph $\Gamma$ is said to be \emph{connected} if for any pair of vertices $u$ and $v,$ there exists a path between $u$ and $v.$ The distance between two vertices $u$ and $v$ in a connected graph $\Gamma$ is the length of the shortest path between them and it is denoted by $d(u, v).$ Clearly, if $u$ and $v$ are adjacent, then $d(u, v)=1.$ A graph $\Gamma$ is said to be \emph{complete} if any two distinct vertices are adjacent. A vertex of a graph $\Gamma$ is called a \emph{dominating vertex} if it is adjacent to every other vertex. For a graph $\Gamma,$ let $\text{\emph{Dom}}(\Gamma)$ denote the set of all dominating vertices in $\Gamma.$    
From the definition of the enhanced power graph, it is clear that the identity of the group is always a dominating vertex. The enhanced power graph is called \emph{dominatable} if it has a dominating vertex other than identity. For more on graph theory we refer \cite{graphthrybondymurti, algbraphgodsil, graphthrywest}. Throughout this paper we consider $G$ as a finite group.  $|G|$ denotes the cardinality of the set $G.$ For a prime $p,$ a group $G$ is said to be a $p$-group if $|G|=p^{r}, r\in \mathbb{N}.$ If $|G|= p_{\ell}^{r}$ for some prime $p_{\ell}$, then we say that $G$ is a $p_{\ell}$-group. For a subgroup $H$ of $G$, we call $H$ a $p$-order subgroup if $|H|=p.$ For any element $g \in G, \text{o}(g)$ denotes the order of the element $g.$ Let $m$ and $n$ be any two positive integers, then the greatest common divisor of $m$ and $n$ is denoted by $\text{gcd}(m, n).$ The Euler's phi function $\phi(n)$ is the number of integers $k$ in the range $1 \leq k \leq n$ for which the  $\text{gcd}(n, k)$ is equal to $1.$ The set $\{1, 2, \cdots, n\}$ is denoted by $[n].$ If $n$ is a positive integer, then $\pi(n)$ denotes the set of prime divisors of $n.$ For a group $G,$ set $\pi(G) = \pi(|G|).$ Fix a set of prime numbers $\pi$. An element $x\in G$ is called a $\pi$-element if every prime divisor of $\text{o}(x)$ is a member of $\pi.$ 

The structure of the paper is outlined as follows. Our main results are presented in Section \ref{Se: main results}, while some previously known results are mentioned in Section \ref{Sec:Preliminaries}. Section \ref{Se: proofs of main results} focuses on the study of the proofs for our main theorems.

\section{Main results}\label{Se: main results}
In this section, we state and motivate the main results of this paper. For that purpose, we first recall the following definitions.
\begin{definition}
	Given an undirected graph $\Gamma=(V, E)$ a set $D \subseteq V$ is said to be a \emph{dominating set} if every vertex of $V\setminus D$ is adjacent to some vertex of $D.$ The domination number of $\Gamma$ is defined as: $\gamma(\Gamma)=\text{min}\{|D|: D \text{ is a dominating set of } \Gamma\}.$ 
\end{definition}
\begin{definition}	
	A total dominating set (or strongly-dominating set) is a set of vertices such that all vertices in the graph, including the vertices in the dominating set themselves, have a neighbor in the dominating set. That is: for every vertex 
	$u\in V, \exists$ a vertex $v\in D$ such that $u\sim v$ in $\Gamma.$ The strong domination number of $\Gamma$ is defined as: $\gamma^{\text{strong}}(\Gamma)=\text{min}\{|D|: D\text{ is a strongly dominating set of } \Gamma\}.$
\end{definition}
The study of the domination problem began in the 1950s and gained significant momentum in the mid-1970s. Dominating sets in graph theory have various applications in communication networks. The minimum dominating set of sites plays a crucial role in network domination, ensuring the entire network is covered at the lowest possible cost. Graph domination has practical implications in several fields, including facility location problems, where minimizing travel distance to the nearest facility is essential. Additionally, graph domination is utilized in safeguarding medical information networks against cyber threats see \cite{D-Angel}. 
In a recent publication by Bera and Dey \cite{bera-dey-JGT}, they successfully established the domination number for proper enhanced power graphs of nilpotent groups that do not possess a generalized quaternion Sylow subgroup. In this study, we focus on computing the strong domination number for these graphs. Additionally, we investigate the domination number for proper enhanced power graphs of nilpotent groups that do have a generalized quaternion Sylow subgroup. To address this inquiry, we initiate our study by introducing the concept of the proper enhanced power graph. 
\begin{definition}\label{defn: proper enhacd pwr graph}	Given a group $G,$ the \emph{proper enhanced power graph} of $G,$ denoted by $\mathcal{G}^{**}_E(G),$ is the graph obtained by deleting all the dominating vertices from the enhanced power graph $\mathcal{G}_E(G).$ Moreover, by $\mathcal{G}^{*}_E(G)$ we denote the graph obtained by deleting only the identity element of $G.$  
\end{definition}
We are now prepared to present the main theorems of this article. Initially, we will describe all finite nilpotent groups $G$ for which the proper enhanced power graph possesses a total dominating set. To achieve this, we have the subsequent results: 
\begin{theorem}\label{Thm: existence of srong dom set in enhd pwr grph}
	If $G$ is a finite nilpotent group, then the graph $\mathcal{G}_E^{**}(G)$ will not possess any total dominating set if and only if the following conditions hold true:
	\begin{enumerate}
		\item $G$ is a $2$-group and 
		\item $G$ has an element of order $2$ which is not contained in any other proper subgroup of order $2^k, k\geq 2.$	
	\end{enumerate}
\end{theorem}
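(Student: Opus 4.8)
The plan is to convert the statement into a question about isolated vertices and then to analyse it through the Sylow decomposition of $G$. The starting point is the elementary fact that a finite graph admits a total (strong) dominating set if and only if it has no isolated vertex: an isolated vertex can never acquire a neighbour in any set, whereas if every vertex has a neighbour then the whole vertex set is itself a total dominating set. So I would first reduce the theorem to deciding exactly when $\mathcal{G}_E^{**}(G)$ has an isolated vertex, the point being that conditions (1) and (2) are precisely what force such a vertex to exist. I would then show that \emph{every isolated vertex of $\mathcal{G}_E^{**}(G)$ is an involution}: if $u$ is isolated with $o(u)=n>2$, then $u^{-1}=u^{\,n-1}\ne u$ lies in $\langle u\rangle$, so $u\sim u^{-1}$, and since $\langle u^{-1}\rangle=\langle u\rangle$ we get $\langle u^{-1},v\rangle=\langle u,v\rangle$ for all $v$, whence $u^{-1}$ has the same neighbourhood and the same domination status as $u$; as $u$ is a non-dominating vertex, so is $u^{-1}$, giving a non-dominating neighbour of $u$ and contradicting isolation. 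Hence $o(u)\le 2$, and since the identity is always dominating, $o(u)=2$.

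The second ingredient is the reduction to Sylow subgroups. Writing $G=P_1\times\cdots\times P_r$ with $P_i$ the Sylow $p_i$-subgroup, and using that in a nilpotent group every subgroup is the direct product of its intersections with the $P_i$, I would record that $\langle x,y\rangle$ is cyclic if and only if each $\langle x_i,y_i\rangle$ is cyclic (a product of cyclic groups of coprime orders is cyclic). Thus both adjacency and the property of being a dominating vertex factor coordinatewise, and in particular $\mathrm{Dom}(\mathcal{G}_E(G))=\prod_i\mathrm{Dom}(\mathcal{G}_E(P_i))$. For a single $p$-group this yields the classification of dominating vertices: all of $P$ when $P$ is cyclic, the central subgroup $\{e,z\}$ generated by the unique involution when $P$ is generalized quaternion, and $\{e\}$ otherwise — indeed a non-identity dominating vertex would force all elements of order $p$ into one cyclic subgroup, so $P$ would have a unique subgroup of order $p$ and hence be cyclic or generalized quaternion. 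The decisive consequence I will use is that in a non-cyclic $2$-group every dominating vertex has order at most $2$.

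With these in hand the two directions follow. For the ``only if'' direction, assume $G$ is \emph{not} a $2$-group: if $|G|$ is odd there are no involutions, while if $G$ has even order with a nontrivial odd Sylow $P_2$, then any involution is $u=(u_1,e,\dots,e)$ with $u_1$ non-dominating (domination factors, and the other coordinates are $e$), and for a nontrivial $s\in P_2$ the element $v=(u_1,s,e,\dots,e)$ is a non-dominating neighbour of $u$; so no involution is isolated, and by the first step $\mathcal{G}_E^{**}(G)$ has no isolated vertex and a total dominating set exists. Hence the absence of a total dominating set forces condition (1). For a $2$-group $P$ one then shows that a non-dominating involution $u$ is isolated precisely when $u$ lies in \emph{no cyclic subgroup of order $2^k$ with $k\ge 2$}: if $u\in C$ with $C$ cyclic of order $\ge 4$, then a generator of $C$ has order $\ge 4$, hence is non-dominating by the previous paragraph and is a neighbour of $u$; conversely, if no such $C$ exists, every $v$ with $\langle u,v\rangle$ cyclic lies in $\{e,u\}$, so the only possible neighbour of $u$ is the (dominating) identity. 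This is exactly condition (2), with the caveat that only \emph{cyclic} subgroups through $u$ create neighbours, so a non-cyclic subgroup of order $\ge 4$ containing $u$ does not by itself destroy isolation. The generalized quaternion case is automatically consistent, since its unique involution is a square lying in a cyclic subgroup of order $4$ and is moreover dominating (hence deleted), so it yields neither an isolated vertex nor a witness for condition (2).

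The step I expect to be the main obstacle is the $2$-group analysis of the third paragraph: pinning down $\mathrm{Dom}(\mathcal{G}_E(P))$ exactly, handling the generalized quaternion subtlety (where the unique involution is simultaneously a square and a dominating vertex), and keeping careful track of the fact that in the enhanced power graph only cyclic subgroups produce adjacencies, which is what makes condition (2) the correct obstruction. The genuinely small cyclic $2$-groups, for which $\mathcal{G}_E^{**}(G)$ is empty, I would dispose of directly as boundary cases.
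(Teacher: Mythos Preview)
Your proof is correct and proceeds along a genuinely different line from the paper. The paper argues directly on the $2$-group case: it partitions the vertex set into the sets $K_i=\{x:a_i\in\langle x\rangle\}$ indexed by the subgroups $\langle a_i\rangle$ of order~$2$, observes that condition~(2) forces some $K_1=\{a_1\}$, and derives a contradiction from any putative total dominating set; for the converse it asserts that failure of the conditions gives $|K_i|\ge2$ for all $i$ and exhibits $D'=\{a_1,a_1',\dots,a_r,a_r'\}$. Your route --- reducing to the existence of an isolated vertex, showing via the $u\mapsto u^{-1}$ trick that any isolated vertex must be an involution, and then running a Sylow-by-Sylow analysis --- is more structural. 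In particular it handles the case ``$G$ not a $2$-group'' explicitly, whereas the paper's converse is phrased only through the sets $K_i$, which are introduced in a $2$-group context. Your approach also re-derives the classification of $\mathrm{Dom}(\mathcal{G}_E(P))$ for $p$-groups as a byproduct, while the paper imports it from prior results.

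Your caveat about condition~(2) is exactly right and worth recording: what actually forces an involution $u$ to be isolated is that $u$ lie in no \emph{cyclic} subgroup of order $2^k$ with $k\ge2$; a non-cyclic subgroup of order $\ge4$ through $u$ produces no adjacency in the enhanced power graph (consider $G=\mathbb{Z}_2^3$). The paper's own proof implicitly uses this cyclic reading through the definition of $K_i$, so you have correctly identified and resolved the only subtlety in the statement.
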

Now we will examine all finite nilpotent groups $G$ for which the proper enhanced power graph $\mathcal{G}_E^{**}(G)$ possesses a total dominating set. In this analysis, we will determine the strong domination number of these graphs. To begin, we must first establish the characteristics of a nilpotent group.

A finite group $G$ is classified as nilpotent if and only if it can be expressed as the direct product of its Sylow subgroups. Each Sylow subgroup $P_i$ has an order of $p_i^{t_i}$, where $p_i$ is a prime number. The generalized quaternion group $Q_{2^k}$ is defined as $Q_{2^k} = \langle x, y\rangle,$ where
\begin{enumerate}
	\item
	$x$ has order $2^{k-1}$ and $y$ has order $4,$
	\item
	every element of $Q_{2^k}$ can be written in the form $x^a$ or $x^ay$ for some $a\in \mathbb{Z},$
	\item
	$x^{2^{k-2}}=y^2,$
	\item
	for each $g\in Q_{2^k}$ with $g\in \langle x \rangle,$ we have $gxg^{-1}=x^{-1}.$
\end{enumerate}
For more information about $Q_{2^k}$ see \cite{generalized-quaternion, algebradummitfoote, scott-group}. 
Let $G_1$ be a finite nilpotent group that does not contain any Sylow subgroups that are either cyclic or generalized quaternion. For each Sylow subgroup $P_i$ in $G_1$, let $r_i$ denote the number of distinct $p_i$-ordered subgroups. It is assumed that $r_1\leq r_2\leq \cdots\leq r_m.$ Now, for a finite nontrivial nilpotent group $G,$ we have the following cases:
\begin{enumerate}
	\item 
	No Sylow subgroups of $G$ are either cyclic or generalized quaternion. In this scenario, $G=G_1.$
	\item
	$G$ has cyclic Sylow subgroups. In this case, $G = G_1\times \mathbb{Z}_n,$ where $G_1$ is described as above and $\text{gcd}(|G_1|, n)=1.$
	\item
	$G$ has a Sylow subgroup isomorphic to generalized quaternion. Here, $G = G_1\times Q_{2^k},$ and $\text{gcd}(|G_1|, 2)=1.$
	\item
	$G$ has both a cyclic Sylow subgroup and a Sylow subgroup isomorphic to generalized quaternion. In this situation, $G = G_1\times \mathbb{Z}_n\times Q_{2^k},$ where $\text{gcd}(|G_1|, n)=\text{gcd}(|G_1|, 2)=\text{gcd}(n, 2)=1.$
\end{enumerate}
Now, we will proceed to introduce three theorems that fully establish the strong domination number of proper enhanced power graphs of finite nilpotent groups. Initially, our focus will be on the nilpotent groups $G$ that do not possess any Sylow subgroups that are either cyclic or generalized quaternion.
\begin{theorem}\label{Thm: strong dom enhan of G1 and p grp nei cylic nor Q}
	Let $G$ be a finite nilpotent group having no Sylow subgroups which are either cyclic or generalized quaternion. Then 
	\begin{equation*}
		\gamma^{\text{strong}}(\mathcal{G}_E^{**}(G))= 
		\begin{cases}
			2r, & \text{ if } G \text{ is a } p\text{-group having } r \text{ distinct } p\text{ order subgroups }  \\
			\left(\displaystyle \min _{ 1 \leq i \leq m}  r_i\right)+1, & \text{ if } G=G_1=P_1\times \cdots\times P_m, m\geq 2.
		\end{cases}
	\end{equation*} 	
\end{theorem}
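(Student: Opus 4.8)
The plan is to reduce the computation to the internal structure of the Sylow factors and then treat the two cases separately, with the real difficulty concentrated in the lower bound of the direct-product case. First I would record two reductions. Since each $P_i$ is neither cyclic nor generalized quaternion, it has more than one subgroup of order $p_i$, and a short argument shows no non-identity element can be dominating in $\mathcal{G}_E(P_i)$: choosing an order-$p_i$ element outside $\langle x\rangle$ gives an element that shares no cyclic subgroup with $x$. Using $G=P_1\times\cdots\times P_m$ and the fact that for coprime-order factors $\langle x,y\rangle$ is cyclic iff every $\langle x_i,y_i\rangle$ is cyclic, this upgrades to $\mathrm{Dom}(\mathcal{G}_E(G))=\{e\}$, so $\mathcal{G}_E^{**}(G)$ is the induced subgraph on $G\setminus\{e\}$ and adjacency is coordinatewise. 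The local tool I use throughout is that in a $p$-group each cyclic subgroup has a unique subgroup of order $p$; writing $s(g)$ for this "bottom", adjacency of $u,v$ forces $s(u_i)=s(v_i)$ in every coordinate where both are nontrivial, while an order-$p_i$ element $h$ is adjacent to every $g$ with $s(g)=\langle h\rangle$.

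For the $p$-group case these facts show $\mathcal{G}_E^{**}(P)$ splits as the disjoint union $\bigsqcup_{j=1}^{r}\Gamma_j$, where $\Gamma_j$ is induced on the bottom class $C_j=\{g:s(g)=H_j\}$ of the $j$-th order-$p$ subgroup $H_j$; no edge joins different classes. Each generator of $H_j$ is adjacent to all of $C_j$, so $\Gamma_j$ is connected with a dominating vertex, and hence (given that a total dominating set exists, i.e. each $C_j$ has at least two vertices) $\gamma^{\text{strong}}(\Gamma_j)=2$, realized by a dominating vertex together with any neighbour. Additivity of the total domination number over components then gives $\gamma^{\text{strong}}(\mathcal{G}_E^{**}(P))=2r$.

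For $G=P_1\times\cdots\times P_m$ with $m\ge 2$ and $r_1=\min_i r_i$, the upper bound is direct. Taking order-$p_1$ elements $h_1,\dots,h_{r_1}$ realizing the $r_1$ bottoms of $P_1$, the pure elements $a_k=(h_k,1,\dots,1)$ already dominate every vertex: a vertex with $v_1\ne 1$ is caught by the $a_k$ with $s(v_1)=\langle h_k\rangle$, and one with $v_1=1$ by every $a_k$. Any $b=(1,b_2,\dots,b_m)\ne e$ then dominates all the $a_k$ and is itself dominated by them, so $\{a_1,\dots,a_{r_1},b\}$ is a total dominating set of size $r_1+1$.

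The hard part will be the matching lower bound $\gamma^{\text{strong}}\ge r_1+1$, and the main obstacle is that the "profile" classes one wants as test vertices are internally cliques and can dominate themselves; I would resolve this by exploiting that a small $D$ meets few of them. Call $v$ \emph{full} if each coordinate is an order-$p_i$ element; its \emph{profile} is $(s(v_1),\dots,s(v_m))$, and the $N=\prod_i r_i$ profiles partition the full elements into cliques $V_{\vec H}$. For a total dominating set $D$ write $D=D^{\text{full}}\sqcup D^{\text{part}}$. A profile containing no full element of $D$ must be covered by a non-full $d\in D$ whose support-bottoms agree with $\vec H$, and such a $d$ is compatible with exactly $\prod_{i\notin\mathrm{supp}(d)}r_i\le N/r_1$ profiles; counting gives $N-|D^{\text{full}}|\le |D^{\text{part}}|\cdot N/r_1$, whence $|D|\ge r_1+|D^{\text{full}}|(1-r_1/N)$. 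Since $N>r_1$, any $D$ with $|D^{\text{full}}|\ge 1$ already satisfies $|D|\ge r_1+1$. It remains to exclude $|D|=r_1$ with $D^{\text{full}}=\varnothing$: then the count is tight, forcing each $d\in D$ to be pure in a single coordinate $j$ with $r_j=r_1$ and the covered subcubes to partition all profiles; disjointness forces a common coordinate and all $r_1$ distinct bottoms, so $D$ is a set of pairwise non-adjacent pure elements, i.e. an independent set, which cannot totally dominate. This contradiction yields $|D|\ge r_1+1$ and, with the upper bound, finishes the proof.
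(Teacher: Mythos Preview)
Your argument is correct in every case. The treatment of the $p$-group case and of the upper bound when $m\ge 2$ is essentially the same as the paper's: components indexed by order-$p$ subgroups give $2r$, and the explicit set $\{(h_1,e,\dots,e),\dots,(h_{r_1},e,\dots,e),\ b\}$ matches the paper's construction $D'$.

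Where you diverge is in the lower bound for $m\ge 2$. The paper's Proposition~4.1 is more direct: it tests only the $r_1$ pure vertices $(a_i,e_2,\dots,e_m)$ in the minimal coordinate, observes that each dominator must have first-coordinate bottom $\langle a_i\rangle$, so a putative $D$ of size $r_1$ already has all first coordinates nontrivial with pairwise distinct bottoms; adjacency within $D$ would then force two distinct $a_i,a_j$ into a common cyclic $p_1$-subgroup, a contradiction. Your route instead introduces full elements, profiles, and a covering count over all coordinates to reach the same terminal contradiction that a size-$r_1$ total dominating set would have to be independent. Both are valid and land on the identical obstruction; the paper's argument is shorter because it never needs to leave one coordinate, while your counting framework is more symmetric and would adapt more readily if one wanted quantitative information beyond the minimum (for instance, to analyse dominating sets not concentrated in a single Sylow factor).
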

Now let us consider the groups $G$ that have only cyclic Sylow subgroups. In this particular case, we can state the following theorem:
\begin{theorem}\label{Thm: strong dom of enh pwr cylic sylow only}
	Let $G$ be a finite nilpotent group such that either $G=\mathbb{Z}_n$ or $G=G_1\times \mathbb{Z}_n,$ where $G_1=P_1\times \cdots \times P_m (m\geq 1)$ and $\text{gcd}(|G_1|, n)=1.$ Then 
	\begin{equation*}
		\gamma^{\text{strong}}(\mathcal{G}_E^{**}(G))= 
		\begin{cases}
			0, & \text{ if } G=\mathbb{Z}_n\\
			\left(\displaystyle \min _{ 1 \leq i \leq m}  r_i\right)+1, & \text{ if } G=G_1\times \mathbb{Z}_n\\
		\end{cases}
	\end{equation*} 	
\end{theorem}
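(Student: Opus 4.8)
The plan is to transfer the problem to the already-understood group $G_1$ by means of the coprime product structure, and then to compute total domination of the resulting blown-up graph. I would first dispose of the cyclic case: if $G=\mathbb{Z}_n$ then every element is a power of a fixed generator, so $\mathcal{G}_E(\mathbb{Z}_n)$ is complete, every vertex is dominating, and $\mathcal{G}_E^{**}(\mathbb{Z}_n)$ is the empty graph; the empty set is vacuously a total dominating set, which gives $\gamma^{\text{strong}}=0$, as asserted.

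Now let $G=G_1\times\mathbb{Z}_n$ with $\gcd(|G_1|,n)=1$. The structural engine is that, the orders being coprime, every cyclic subgroup of $G$ factors as a cyclic subgroup of $G_1$ times a cyclic subgroup of $\mathbb{Z}_n$; since $\mathbb{Z}_n$ is itself cyclic, this yields that $(x,a)\sim(y,b)$ in $\mathcal{G}_E(G)$ if and only if $x$ and $y$ lie in a common cyclic subgroup of $G_1$. I would record two consequences as lemmas. First, $(x,a)$ dominates $\mathcal{G}_E(G)$ exactly when $x$ dominates $\mathcal{G}_E(G_1)$, so the set of deleted vertices equals $\mathrm{Dom}(\mathcal{G}_E(G_1))\times\mathbb{Z}_n$. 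Second, writing $H=\mathcal{G}_E^{**}(G_1)$, the induced graph on the survivors is the lexicographic product $H[K_n]$: each vertex of $H$ is blown up into a clique of size $n$, and the fibers over distinct $x,y$ are completely joined precisely when $x\sim y$ in $H$. The whole question therefore reduces to the evaluation of $\gamma^{\text{strong}}(H[K_n])$.

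For the upper bound I would lift a minimum total dominating set. By Theorem~\ref{Thm: strong dom enhan of G1 and p grp nei cylic nor Q}, the graph $H=\mathcal{G}_E^{**}(G_1)$ admits a total dominating set $S$ with $|S|=(\min_i r_i)+1$. Choosing one fiber-copy of each element of $S$ produces the set $S\times\{0\}$, which totally dominates $H[K_n]$: any vertex with $G_1$-coordinate $x$ has an $H$-neighbour in $S$ and is thereby dominated, while each chosen $(s,0)$ is dominated because $s$ already has a neighbour inside $S$. Hence $\gamma^{\text{strong}}(H[K_n])\le(\min_i r_i)+1$.

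The heart of the argument is the matching lower bound, which I expect to be the main obstacle. Fix $p_j$ realising $r_j=\min_i r_i$ and let $C_1,\dots,C_{r_j}$ be the order-$p_j$ subgroups of $P_j$. The decisive local fact is that a cyclic $p_j$-group has a unique subgroup of order $p_j$, so a vertex $v$ of $H$ can be adjacent to an order-$p_j$ element lying in $C_k$ only when the $P_j$-component of $v$ generates a group containing $C_k$; thus each vertex is responsible for at most one cluster. Given any total dominating set $D$ of $H[K_n]$ with support $S$, dominating one order-$p_j$ representative from each cluster forces $|S|\ge r_j$, and in the extremal case $|S|=r_j$ the support vertices are pairwise non-adjacent in $H$, so each of their fibers must carry a second vertex of $D$ in order to dominate the first, giving $|D|\ge 2r_j$; either regime yields $|D|\ge(\min_i r_i)+1$. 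The genuinely delicate points are precisely ruling out that a single vertex of $D$ could cover two clusters at once and showing that the forced doubling in the tight regime cannot be dodged by borrowing dominators across fibers; in addition, the $p$-group case $m=1$ must be treated separately. Once these are settled the two bounds coincide and the stated formula follows.
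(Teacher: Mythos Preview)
Your lexicographic-product viewpoint is a different route from the paper's: the paper writes down an explicit candidate total dominating set in $G_1\times\mathbb{Z}_n$ and verifies by a three-case analysis that it works, then invokes Proposition~\ref{Prop: strong dom geq r1+1,G1 and g1 times cyclic, enhanced pwr} for the lower bound. Your upper bound by lifting a total dominating set of $H=\mathcal{G}_E^{**}(G_1)$ through the blow-up is cleaner and, for $m\ge2$, correct.

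The lower bound, however, has a genuine gap rather than just a delicate point. Your ``decisive local fact'' asserts that a vertex $v$ of $H$ adjacent to an order-$p_j$ element in $C_k$ must have $P_j$-component generating a group containing $C_k$. This fails exactly when that component is the identity $e_j$: for $m\ge2$, any vertex $v=(v_1,\dots,v_{j-1},e_j,v_{j+1},\dots,v_m)$ with some $v_i\neq e_i$ lies in $V(H)$ and, by Lemma~\ref{lemma:any_odtwo_commuting_elements_of_prime_order_adjacent}, is adjacent to \emph{every} order-$p_j$ element of $G_1$, hence to representatives of all $r_j$ clusters simultaneously. So ``each vertex is responsible for at most one cluster'' is false, and the inequality $|S|\ge r_j$ does not follow from your argument. (The paper's proof of Proposition~\ref{Prop: strong dom geq r1+1,G1 and g1 times cyclic, enhanced pwr} passes over the identical issue when it deduces $a_i\in\langle x_{i1}\rangle$ without excluding $x_{i1}=e_1$.) A repair requires either showing that such ``universal'' support vertices cannot occur in a putative total dominating set of size $r_j$, or importing the known value $\gamma(H)=r_1$ and analysing the structure of minimum dominating sets.

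For $m=1$ your own framework exposes a further problem with the target statement. Here $H=\mathcal{G}_E^{**}(P_1)$ has $r_1$ components by Lemma~\ref{lem:no of compo in deleted enhanced p grp}, so $H[K_n]$ also has $r_1$ components, each of size at least $n\ge2$; total domination then forces at least two vertices per component, giving $\gamma^{\text{strong}}(\mathcal{G}_E^{**}(G))\ge 2r_1>r_1+1$. Your lifting upper bound likewise produces $2r_1$ in this case (since Theorem~\ref{Thm: strong dom enhan of G1 and p grp nei cylic nor Q} gives $\gamma^{\text{strong}}(H)=2r_1$ for a $p$-group). So the formula $r_1+1$ cannot hold when $m=1$, and no separate treatment will rescue it.
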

In the current investigation, we explore the nilpotent groups $G$ that have Sylow subgroups of the generalized quaternion nature. In these particular cases, we compute both the strong domination number and the domination number. 
\begin{theorem}\label{Thm: strong dom enhanced pwr nilpotent grp having Q2k}
	Let $G$ be a finite nilpotent group having a Sylow subgroup which is a generalized quaternion. Then 
	\begin{equation*}
		\gamma^{\text{strong}}(\mathcal{G}_E^{**}(G))= 
		\begin{cases}
			2^{k-1}+2, & \text{ if either } G=Q_{2^k} \text{ or }\\& G=\mathbb{Z}_n\times Q_{2^k} \text{ and } \text{gcd}(n, 2)=1 \\
			\text{ min }\{r_1, 2^{k-2}+1\}+1, & \text{ if either } G=G_1 \times Q_{2^k}\text{ and } \text{gcd}(|G_1|, 2)=1\\& \text{ or } G=G_1\times \mathbb{Z}_n\times Q_{2^k} \text{ and } \\&\text{gcd}(|G_1|, n)=\text{gcd}(|G_1|, 2)=\text{gcd}(n, 2)=1.
		\end{cases}
	\end{equation*} 	
\end{theorem}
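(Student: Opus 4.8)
The plan is to reduce everything to the combinatorial structure of $\mathcal{G}_E^{**}(G)$ forced by the coprime direct product decomposition. Every cyclic subgroup of a nilpotent group is the direct product of cyclic subgroups of its Sylow factors, so two elements are adjacent in $\mathcal{G}_E(G)$ if and only if their projections to each factor lie in a common cyclic subgroup of that factor. I will treat the main case $G=G_1\times Q_{2^k}$ in detail and then observe that an extra cyclic factor $\mathbb{Z}_n$ only blows up every ``part'' of the picture by the factor $n$ (each element of $\mathbb{Z}_n$, like the identity of any factor, lies in a common cyclic subgroup with everything), without changing any domination count, since each dominator I construct already covers an entire blown-up clique. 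Writing $z=x^{2^{k-2}}$ for the unique involution of $Q_{2^k}$, the maximal cyclic subgroups of $Q_{2^k}$ are $\langle x\rangle$ and the $2^{k-2}$ subgroups $\langle x^cy\rangle$ of order $4$, all meeting in $\{1,z\}$; using the preliminary fact that a $p$-group has a nonidentity dominating vertex only when it is cyclic or generalized quaternion, the dominating vertices of $\mathcal{G}_E(G)$ are exactly $(\vec e,1)$ and $(\vec e,z)$. Splitting the remaining vertices by whether their $Q_{2^k}$-part lies in $\langle x\rangle$ (region $X$) or has the form $x^cy$ (region $Y$), I will record the exact adjacencies: two $Y$-vertices are adjacent only if their $Q$-parts generate the same order-$4$ subgroup, an $X$-vertex meets a $Y$-vertex only when its $Q$-part lies in $\{1,z\}$, and every adjacency additionally requires the $G_1$-projections to lie in a common cyclic subgroup. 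This bookkeeping is routine but underlies both branches.

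For the first branch $G=Q_{2^k}$ or $G=\mathbb{Z}_n\times Q_{2^k}$ there is no $G_1$, so the $G_1$-coordinate imposes no constraint and the graph disconnects completely: after deleting the two dominating vertices one is left with a single clique on (the blow-up of) $\langle x\rangle\setminus\{1,z\}$ together with exactly $2^{k-2}$ further cliques, one for each subgroup $\langle x^cy\rangle$. Each of these $1+2^{k-2}$ cliques has at least two vertices, and the strong domination number of a disjoint union of cliques of size at least $2$ is exactly twice the number of cliques, since one clique of size at least $2$ needs two of its vertices to totally dominate itself. This gives $2(1+2^{k-2})=2^{k-1}+2$, and the factor $\mathbb{Z}_n$ only enlarges the cliques.

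For the second branch I prove matching bounds for $\min\{r_1,2^{k-2}+1\}+1=\min\{r_1+1,\ 2^{k-2}+2\}$ by exhibiting two total dominating sets and taking the smaller. Strategy B uses $(\vec e,x)$, which (being universal in the $G_1$-coordinate and generating $\langle x\rangle$) dominates all of region $X$, together with $r_1$ slab vertices $(\vec h,1)$, where the $\vec h$ range over one generator from each of the $r_1$ order-$p_{i^\ast}$ subgroups of the factor $P_{i^\ast}$ realizing $r_1=\min_i r_i$ (all other $G_1$-coordinates trivial); these dominate every element of $G_1$ in the common-cyclic sense, hence all of region $Y$, and $(\vec e,x)$ supplies the neighbor they require, giving size $r_1+1$. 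Strategy A instead uses the $2^{k-2}$ vertices $(\vec e,x^cy)$ to cover region $Y$, the vertex $(\vec e,x)$ to cover region $X$, and one slab vertex $(\vec g_0,1)$ with $\vec g_0\neq\vec e$ to totally dominate the pairwise-nonadjacent vertices $(\vec e,x^cy)$, giving size $2^{k-2}+2$.

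The real work is the lower bound, and this is where I expect the main difficulty. Let $D$ be a total dominating set and split its members, according to their $Q_{2^k}$-coordinate $q'$, into \emph{slab} dominators ($q'\in\{1,z\}$), \emph{$Y$-dominators} ($q'=x^cy$, attached to $\langle x^cy\rangle$), and \emph{$X$-dominators} ($q'\in\langle x\rangle\setminus\{1,z\}$); only the first two can dominate a ``corner'' vertex $(\vec s,x^cy)$ in which $\vec s$ is a socle element (each coordinate of order $p_i$). The crux is a dichotomy: either the slab dominators reach every socle class, or some socle class $\sigma_0$ is missed. In the first case a counting argument shows at least $r_1$ slab dominators are needed --- each nonidentity dominator is pinned on its support coordinates to a single order-$p_i$ subgroup, so with fewer than $\min_i r_i$ of them one can choose, coordinate by coordinate, an unreached socle class --- and since the extremal configurations consist of pairwise nonadjacent slab dominators, total domination forces one extra vertex, giving $|D|\ge r_1+1$. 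In the second case every one of the $2^{k-2}$ order-$4$ subgroups must carry a $Y$-dominator reaching $\sigma_0$, the region-$X$ vertex $(\vec s_0,x)$ forces a separate $X$-dominator (no slab dominator reaches $\sigma_0$ and $Y$-dominators miss $\langle x\rangle$), and totally dominating the $Y$-dominators forces a further slab vertex --- or, if slab vertices are avoided, at least two $Y$-dominators per subgroup, which for $k\ge 3$ is never cheaper --- yielding $|D|\ge 2^{k-2}+2$. Together these match the upper bound. The delicate points I will have to treat with care are the exact accounting of the additive ``$+1$'' and ``$+2$'' arising from the total-domination requirement, the precise description of the extremal slab configurations, and the verification that the $\mathbb{Z}_n$ blow-up and the interaction among the three dominator types create no cheaper configuration.
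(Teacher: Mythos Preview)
Your plan is correct and would yield a complete proof, but the lower-bound argument in the second branch ($G=G_1\times Q_{2^k}$ or $G=G_1\times\mathbb Z_n\times Q_{2^k}$) goes a genuinely different way from the paper.

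The paper's route is shorter and uniform: it fixes in advance the ``target'' vertices $(a_i,e_2,\dots,e_m,e'')$ for $i\in[r_1]$ (respectively $(e_1,\dots,e_m,b_i)$ for the $2^{k-2}+1$ order-$4$ subgroups), picks for each a dominator in $D$ whose relevant coordinate pins the corresponding prime/order-$4$ subgroup, and shows directly that these dominators are pairwise nonadjacent. An independent subset of a total dominating set cannot be all of it, so $|D|\ge\min\{r_1,2^{k-2}+1\}+1$ immediately. No case split, no coverage counting, no extremal analysis.

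Your dichotomy (slab dominators cover every socle class / some class $\sigma_0$ is missed) is more structural and buys a finer picture of what minimum total dominating sets can look like, but at a real cost: the step ``the extremal configurations consist of pairwise nonadjacent slab dominators'' is a nontrivial claim. You will need to argue that with exactly $r_1$ slab dominators covering all $\prod_i r_i$ socle classes, the covering bound $\sum\prod_{i\notin S}r_i\le r_1\prod_{i\ge2}r_i$ is tight only when every support is the same singleton $\{j\}$ with $r_j=r_1$ and the pins are distinct---hence nonadjacent. This works (mixed singleton supports always overlap), but it is exactly the ``precise description of the extremal slab configurations'' you flag, and the paper sidesteps it entirely. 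Your case~2 accounting ($2^{k-2}$ $Y$-dominators $+$ one $X$-dominator $+$ one more, since $X$- and $Y$-dominators never meet and distinct $Y$-subgroups never meet) is clean and matches the paper's component reasoning for the first branch. The upper-bound constructions (your Strategies A and B) are essentially the paper's sets $D_2$ and $D_1$.
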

\begin{theorem}\label{Thm:dom enhanced pwr nilpotent grp having Q2k}
	Let $G$ be a finite nilpotent group having a Sylow subgroup which is a generalized quaternion. Then 
	\begin{equation*}
		\gamma(\mathcal{G}_E^{**}(G))= 
		\begin{cases}
			2^{k-2}+1, & \text{ if either } G=Q_{2^k} \text{ or }\\& G=\mathbb{Z}_n\times Q_{2^k} \text{ and } \text{gcd}(n, 2)=1 \\
			\text{ min }\{r_1, 2^{k-2}+1\}, & \text{ if either } G=G_1 \times Q_{2^k}\text{ and } \text{gcd}(|G_1|, 2)=1\\& \text{ or } G=G_1\times \mathbb{Z}_n\times Q_{2^k} \text{ and } \\&\text{gcd}(|G_1|, n)=\text{gcd}(|G_1|, 2)=\text{gcd}(n, 2)=1.
		\end{cases}
	\end{equation*} 	
\end{theorem}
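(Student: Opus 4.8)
The plan is to reduce everything to the combinatorial structure of $\mathcal{G}_E^{**}(G)$, which is controlled by the maximal cyclic subgroups of $Q_{2^k}$ together with the minimal subgroups of the Sylow subgroups of $G_1$. First I would record the structural facts that drive the proof. Writing $z=x^{2^{k-2}}$ for the central involution, one checks that $\mathrm{Dom}(\mathcal{G}_E(Q_{2^k}))=\{1,z\}$, that $Q_{2^k}$ has exactly $2^{k-2}+1$ maximal cyclic subgroups (namely $\langle x\rangle$ together with the $2^{k-2}$ subgroups $\langle x^iy\rangle$ of order $4$), and that any two of them meet only in $\{1,z\}$. Using the standard fact that for coprime direct factors both the property of lying in a common cyclic subgroup and the set of dominating vertices split coordinatewise, the dominating vertices of $\mathcal{G}_E(G)$ are precisely the tuples whose $Q_{2^k}$-coordinate lies in $\{1,z\}$ and whose $G_1$-coordinate is trivial; deleting them yields $\mathcal{G}_E^{**}(G)$, in which two surviving vertices are adjacent if and only if their $G_1$- and $Q_{2^k}$-coordinates simultaneously lie in common cyclic subgroups (the cyclic $\Z_n$-coordinate, when present, imposes no condition).

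For the first case ($G=Q_{2^k}$ or $G=\Z_n\times Q_{2^k}$) I would show directly that $\mathcal{G}_E^{**}(G)$ is a disjoint union of exactly $2^{k-2}+1$ nonempty cliques, one indexed by $\langle x\rangle$ and one by each order-$4$ subgroup, with no edges between them since distinct maximal cyclic subgroups share only the deleted vertices $\{1,z\}$. A disjoint union of $N$ nonempty cliques has domination number $N$, giving $\gamma(\mathcal{G}_E^{**}(G))=2^{k-2}+1$; this is consistent with Theorem~\ref{Thm: strong dom enhanced pwr nilpotent grp having Q2k}, as each clique needs one vertex to be dominated but two to be totally dominated, explaining the factor of $2$.

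For the second case I would establish matching bounds for $\min\{r_1,2^{k-2}+1\}$. For the upper bound I exhibit two dominating sets. Fixing generators $w_1,\dots,w_{2^{k-2}+1}$ of the maximal cyclic subgroups $M_1,\dots,M_{2^{k-2}+1}$ of $Q_{2^k}$, each vertex $(1_{G_1},w_j)$ (with the $\Z_n$-coordinate chosen arbitrarily) is nondeleted and dominates every vertex whose $Q_{2^k}$-coordinate lies in $M_j$; since these subgroups cover $Q_{2^k}$, the $2^{k-2}+1$ such vertices dominate the whole graph. For the second set, let $i_0$ satisfy $r_{i_0}=r_1$ and let $a_1,\dots,a_{r_1}$ generate the minimal subgroups of $P_{i_0}$; then $(a_s,z)$ dominates $(h,d)$ whenever $\langle a_s,h\rangle$ is cyclic, and since every element of $P_{i_0}$ lies in a cyclic subgroup containing a unique minimal subgroup, these $r_1$ vertices cover all of $G_1$ and hence dominate everything. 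Taking the smaller set gives $\gamma\le\min\{r_1,2^{k-2}+1\}$.

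The lower bound is the main obstacle, because a vertex with a trivial $Q_{2^k}$- or $G_1$-coordinate dominates a very large ``slab'', so a naive packing bound is too weak. The argument I would run is as follows. Suppose a dominating set $D$ satisfies $|D|<r_1$ and $|D|<2^{k-2}+1$. As $|D|<2^{k-2}+1$, some maximal cyclic subgroup $M_{j^*}$ receives no $Q_{2^k}$-coordinate of $D$ outside $\{1,z\}$; in particular no vertex $(g,w_{j^*})$ lies in $D$. Since $w_{j^*}$ generates $M_{j^*}$, a vertex can dominate $(g,w_{j^*})$ only if its $Q_{2^k}$-coordinate lies in $M_{j^*}$, hence only if that coordinate is $1$ or $z$. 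Therefore the set $T$ of $G_1$-coordinates of those vertices of $D$ whose $Q_{2^k}$-coordinate is $1$ or $z$ must cover all of $G_1$, with $1_{G_1}\notin T$. The final step is a counting lemma: among the $\prod_i r_i$ elements of $G_1$ that are products of one prime-order generator from a minimal subgroup of each $P_i$, a single nontrivial $c$ covers only those agreeing with $c$ on its support, i.e.\ a subcube of size $\prod_{i:\,c_i=1}r_i\le(\prod_i r_i)/r_1$; hence covering all of them forces $|T|\ge r_1$, contradicting $|T|\le|D|<r_1$. This yields $\gamma\ge\min\{r_1,2^{k-2}+1\}$ and finishes the proof. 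I expect the only delicate points to be the bookkeeping in this counting lemma and the verification that no extraneous vertices are deleted in the cases carrying a $\Z_n$-factor; the structural reductions largely parallel the analysis underlying Theorem~\ref{Thm: strong dom enhanced pwr nilpotent grp having Q2k}.
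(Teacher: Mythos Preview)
Your argument is correct, and for the first case together with the upper bounds in the second case it runs parallel to what the paper does (the paper simply points back to the proof of Theorem~\ref{Thm: strong dom enhanced pwr nilpotent grp having Q2k}, whose dominating sets $D_1,D_2$ with the last vertex removed are exactly your two constructions, and whose component count for $Q_{2^k}$ and $\mathbb{Z}_n\times Q_{2^k}$ gives the first case).

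Where your proof genuinely diverges is the lower bound in the second case. The paper's route (via Proposition~\ref{Prop: strong dom enhanced pwr nilpotent grp having Q2k}) tests only the vertices $(a_{i1},e_2,\dots,e_m,e',e'')$ and asserts that a dominator $(x_{i1},\dots,y_i)$ of such a vertex must satisfy $a_{i1}\in\langle x_{i1}\rangle$, forcing $r_1$ distinct elements in $D$. That deduction tacitly assumes $x_{i1}\neq e_1$; but a single vertex such as $(e_1,\dots,e_m,e',b)$ with $\text{o}(b)=4$ already dominates \emph{every} test vertex $(a_{i1},e_2,\dots,e')$, so the paper's pigeonhole on the first coordinate alone does not close. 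Your two-stage argument---first isolating an unused maximal cyclic $M_{j^*}\le Q_{2^k}$ to force the relevant dominators to have $Q$-coordinate in $\{1,z\}$ and hence nontrivial $G_1$-coordinate, and then running the subcube counting lemma on products of prime-order elements across all $P_i$---is exactly what is needed to handle dominators with a trivial $P_1$-coordinate. This is a real strengthening: it buys you a rigorous $\gamma\geq\min\{r_1,2^{k-2}+1\}$ without any hidden assumption on the shape of $D$, at the cost of the extra combinatorial step. The only cosmetic point worth tightening is to say explicitly that your ``$\prod_i r_i$ elements'' are obtained by fixing one generator per minimal subgroup of each $P_i$, so that the subcube count $\prod_{i:c_i=e_i}r_i\le(\prod_i r_i)/r_1$ is literally a count of these representatives.
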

\section{Preliminaries}\label{Sec:Preliminaries} 
In this section, we begin by revisiting certain previously established findings that are relevant to our paper. Bera et al. conducted a comprehensive study on various intriguing properties of enhanced power graphs of finite groups. Their works \cite{enhancedpwrgrapbb3} and \cite{bera-dey-sajal} effectively showcased the following results:
\begin{lemma}[Theorem 2.4, \cite{enhancedpwrgrapbb3}]\label{lemma:enhd coplte iff cyclic}
	The enhanced power graph $\mathcal{G}_E(G)$ of the group $G$ is complete if and only if $G$ is cyclic.
\end{lemma}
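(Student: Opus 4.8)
The plan is to work directly from the defining adjacency condition of the enhanced power graph: two distinct vertices $u,v$ are adjacent in $\mathcal{G}_E(G)$ precisely when there is some $w\in G$ with $u,v\in\langle w\rangle$, i.e.\ when $u$ and $v$ lie in a common cyclic subgroup of $G$. I would treat the two implications separately.

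For the easy direction, suppose $G=\langle g\rangle$ is cyclic. Then every pair of vertices $u,v\in G$ satisfies $u,v\in\langle g\rangle$, so taking $w=g$ in the defining condition shows that $u$ and $v$ are adjacent. As this holds for all pairs, $\mathcal{G}_E(G)$ is complete.

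For the converse, assume $\mathcal{G}_E(G)$ is complete; I would show $G$ is cyclic in two steps. First, since every pair $u,v$ lies in a common cyclic subgroup $\langle w\rangle$ and elements of a cyclic group commute, every pair of elements of $G$ commutes, so $G$ is abelian. Second, I argue by contradiction: suppose the finite abelian group $G$ is not cyclic. By the structure theorem for finite abelian groups, some Sylow $p$-subgroup of $G$ is then non-cyclic and hence contains two distinct subgroups of order $p$, say $\langle a\rangle\neq\langle b\rangle$ with $a,b$ of order $p$. Completeness forces $a$ and $b$ into a common cyclic subgroup $\langle w\rangle$, so both $\langle a\rangle$ and $\langle b\rangle$ are subgroups of $\langle w\rangle$ of order $p$. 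But a finite cyclic group has exactly one subgroup of each order dividing its order, giving $\langle a\rangle=\langle b\rangle$, a contradiction. Hence $G$ must be cyclic.

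The conceptual core is the single observation that two distinct subgroups of prime order can never sit inside one cyclic subgroup, which rests entirely on the uniqueness of subgroups of a given order in a cyclic group. I do not expect a serious obstacle here: the reduction to the abelian case via commutativity and the appeal to the structure theorem are both routine, and the finiteness of $G$ assumed throughout the paper is exactly what makes the uniqueness-of-subgroups argument available.
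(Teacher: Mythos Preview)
Your argument is correct. The forward direction is immediate, and for the converse your two-step reduction---first to abelian via commutativity in cyclic groups, then to a contradiction via two distinct order-$p$ subgroups that cannot coexist in a single cyclic subgroup---is clean and complete.

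There is nothing to compare against in this paper: Lemma~\ref{lemma:enhd coplte iff cyclic} is quoted from \cite{enhancedpwrgrapbb3} as a preliminary result and no proof is reproduced here. Your write-up would serve perfectly well as a self-contained justification. One minor stylistic remark: the detour through the structure theorem is not strictly necessary---once $G$ is abelian and non-cyclic you can produce two distinct subgroups of the same prime order directly (e.g.\ by taking a prime $p$ for which the $p$-primary component is non-cyclic and noting that a non-cyclic abelian $p$-group has more than $p-1$ elements of order $p$)---but invoking the structure theorem is certainly legitimate and keeps the exposition short.
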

\begin{lemma}[Lemma 2.5,  \cite{bera-dey-sajal}]
	\label{lemma:any_odtwo_commuting_elements_of_prime_order_adjacent}
	Let $G$ be a finite group and $x, y \in G \setminus
	\{e\}$ be such that $\text{gcd}(\text{o}(x), \text{o}(y)) = 1 $ and $ xy = yx.$ Then, $x \sim y$ in $\mathcal{G}_E^*(G).$ 
\end{lemma}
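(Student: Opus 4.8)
The plan is to produce an explicit witness $w \in G$ satisfying both $x \in \langle w \rangle$ and $y \in \langle w \rangle$, which is exactly what Definition~\ref{defn: enhcdpowr graph} requires for adjacency. Since $x$ and $y$ commute, the natural candidate is $w = xy$. Writing $m = \text{o}(x)$ and $n = \text{o}(y)$, the coprimality hypothesis $\text{gcd}(m, n) = 1$ should let me recover each of $x$ and $y$ as a suitable power of $w$.

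First I would observe that $x \neq y$: if $x = y$ then $\text{gcd}(\text{o}(x), \text{o}(y)) = \text{o}(x) = 1$, forcing $x = e$, contrary to hypothesis. Next, using that $x$ and $y$ commute, I would compute
\[
 w^n = (xy)^n = x^n y^n = x^n,
\]
since $y^n = e$. Because $\text{gcd}(m, n) = 1$, the power $x^n$ still generates $\langle x \rangle$, that is $\langle x^n \rangle = \langle x \rangle$, so $x \in \langle x^n \rangle = \langle w^n \rangle \subseteq \langle w \rangle$. By the symmetric computation $w^m = x^m y^m = y^m$ together with $\langle y^m \rangle = \langle y \rangle$, I would obtain $y \in \langle w \rangle$ as well. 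Hence both $x$ and $y$ lie in the cyclic subgroup $\langle w \rangle$, so they are adjacent in $\mathcal{G}_E(G)$; since neither equals the identity, the edge survives in $\mathcal{G}_E^*(G)$.

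The argument is essentially computational and I do not anticipate a genuine obstacle; the only point requiring care is the elementary number-theoretic fact that $\langle x^n \rangle = \langle x \rangle$ when $\text{gcd}(n, m) = 1$, which holds because $x^n$ has order $m / \text{gcd}(n, m) = m$. An alternative, equally short route would be to note directly that $\langle x \rangle \cap \langle y \rangle = \{e\}$ (any common element has order dividing $\text{gcd}(m, n) = 1$), whence $\text{o}(xy) = mn$ and the cyclic group $\langle xy \rangle$ of order $mn$ must contain the unique subgroups $\langle x \rangle$ and $\langle y \rangle$ of orders $m$ and $n$; either formulation yields the claim.
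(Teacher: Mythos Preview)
Your argument is correct: setting $w = xy$ and using commutativity together with $\gcd(m,n)=1$ to recover $x$ and $y$ as powers of $w$ is the standard proof of this fact, and all steps (including the care you take with $x \neq y$ and $\langle x^n\rangle = \langle x\rangle$) are sound.

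There is nothing to compare against, however: the paper does not prove this lemma itself. It is quoted in the Preliminaries section as Lemma~2.5 of \cite{bera-dey-sajal} and used as a black box. So your write-up supplies a proof where the paper simply cites one.
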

\begin{lemma}[Lemma 2.6,  \cite{bera-dey-sajal}]
	\label{lema: p and p^i orderd path connd abelong< b>}
	Let $G$ be a $p$-group. Let $a, b$ be two elements of $G$ of order $p, p^i (i\geq 1)$ respectively. If there is a path between $a$ and $b$ in $\mathcal{G}_E^*(G),$ then $\langle a\rangle \subseteq \langle b\rangle.$ In particular, if both a and b have order p, then, $\langle a \rangle =\langle b \rangle.$
\end{lemma}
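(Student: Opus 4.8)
The plan is to propagate the containment $\langle a\rangle\subseteq\langle\,\cdot\,\rangle$ one edge at a time along the given path, exploiting the fact that in a finite $p$-group every cyclic subgroup is itself a cyclic $p$-group and therefore possesses a \emph{unique} subgroup of order $p$. First I would record the reinterpretation of adjacency that is implicit in Definition~\ref{defn: enhcdpowr graph}: since $\langle w\rangle$ is cyclic, two vertices $u,v$ are joined in $\mathcal{G}_E(G)$ precisely when they lie in a common cyclic subgroup $\langle w\rangle$. When $G$ is a $p$-group, any such $\langle w\rangle$ is cyclic of order $p^j$, so its subgroups form a single chain and it contains exactly one subgroup of order $p$.

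Next I would isolate and prove the single crucial step. Suppose $c$ is a nontrivial element with $\langle a\rangle\subseteq\langle c\rangle$, and suppose $c\sim d$ in $\mathcal{G}_E^*(G)$, so that $c,d\in\langle w\rangle$ for some $w$. Then $\langle a\rangle\subseteq\langle c\rangle\subseteq\langle w\rangle$, and since $a$ has order $p$, the subgroup $\langle a\rangle$ must be the unique order-$p$ subgroup of the cyclic $p$-group $\langle w\rangle$. Because we work in the \emph{proper} graph $\mathcal{G}_E^*(G)$, the vertex $d$ is not the identity, so $\langle d\rangle$ is a nontrivial subgroup of $\langle w\rangle$ and hence contains that unique order-$p$ subgroup; thus $\langle a\rangle\subseteq\langle d\rangle$. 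It is essential here that the identity has been deleted, since otherwise $d=e$ would break the conclusion.

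Finally I would run the induction along the path $a=v_0\sim v_1\sim\cdots\sim v_k=b$. The base case $\langle a\rangle\subseteq\langle v_0\rangle$ is immediate because $v_0=a$, and the step just established carries $\langle a\rangle\subseteq\langle v_j\rangle$ to $\langle a\rangle\subseteq\langle v_{j+1}\rangle$ (applying it with $c=v_j$ and $d=v_{j+1}$, which is legitimate as every $v_j$ is a non-identity vertex of $\mathcal{G}_E^*(G)$). Taking $j=k$ yields $\langle a\rangle\subseteq\langle b\rangle$. For the ``in particular'' clause, if $b$ also has order $p$ then $\langle a\rangle$ and $\langle b\rangle$ are both subgroups of order $p$ with $\langle a\rangle\subseteq\langle b\rangle$, forcing $\langle a\rangle=\langle b\rangle$.

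The argument is short, so there is no serious obstacle; the only point requiring care, and the precise reason the result is special to $p$-groups and to the \emph{proper} graph, is the combination of two facts: the uniqueness of the order-$p$ subgroup inside a cyclic $p$-group, and the exclusion of the identity vertex. Once these are in place the inductive propagation is routine, and I would simply verify that the adjacency-via-common-cyclic-subgroup reading is applied correctly at each edge.
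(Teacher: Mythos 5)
Your proof is correct, and it pins down exactly the two points the lemma hinges on: the uniqueness of the order-$p$ subgroup inside any cyclic subgroup of a $p$-group, and the fact that working in $\mathcal{G}_E^{*}(G)$ excludes the identity, so each $\langle v_j\rangle$ along the path is nontrivial and must contain that unique order-$p$ subgroup. The paper itself offers no proof to compare against---it quotes the statement as Lemma 2.6 of \cite{bera-dey-sajal}---but your inductive propagation of $\langle a\rangle\subseteq\langle v_j\rangle$ edge by edge is precisely the standard argument for this fact, so nothing further is needed.
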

\begin{lemma}[Theorem 3.3, \cite{enhancedpwrgrapbb3}]\label{dom of gen Q_2^n in enhced}
	Let $G$ be a non-abelian $2$-group. Then the enhanced power graph $\mathcal{G}_E(G)$ is dominatable if and only if $G$ is a generalized quaternion group. In this case, identity and the unique element of order $2$ are dominating vertices.
\end{lemma}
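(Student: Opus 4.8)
The plan is to work throughout with the reformulation, immediate from Definition~\ref{defn: enhcdpowr graph}, that two distinct vertices $u,v$ are adjacent in $\mathcal{G}_E(G)$ precisely when they lie in a common cyclic subgroup of $G$. Under this reading, $\mathcal{G}_E(G)$ is dominatable exactly when there is some $g\neq e$ such that for every $h\in G$ the pair $g,h$ sits inside a common cyclic subgroup. A first useful reduction is that if such a dominating $g$ exists, then so does a dominating \emph{involution}: writing $\text{o}(g)=2^{j}$ and setting $z=g^{2^{j-1}}$, for any $h$ the element $g$ and $h$ lie in a common cyclic $\langle w\rangle$, and then $z$, being a power of $g$, also lies in $\langle w\rangle$; hence $z\sim h$ as well. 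So without loss of generality a dominating vertex may be taken to have order $2$.

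For the easy direction ($\Leftarrow$), suppose $G=Q_{2^k}$. The structural facts in the definition of $Q_{2^k}$ give a unique element $z=x^{2^{k-2}}=y^{2}$ of order $2$, generating the center $\langle z\rangle=Z(Q_{2^k})$. I would show $z$ lies in every nontrivial subgroup $H$: since $H$ is a nontrivial $2$-group it contains an element of order $2$, and as $Q_{2^k}$ has a unique subgroup of order $2$ this forces $\langle z\rangle\subseteq H$. Applying this to $H=\langle h\rangle$ for each $h\neq e$ shows $z\in\langle h\rangle$, so $z$ and $h$ share the cyclic subgroup $\langle h\rangle$ and hence $z\sim h$. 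Thus $z$ (and trivially $e$) are dominating vertices, so $\mathcal{G}_E(Q_{2^k})$ is dominatable, and these are the two dominating vertices named in the statement.

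For the converse ($\Rightarrow$), assume $\mathcal{G}_E(G)$ is dominatable and let $z$ be a dominating involution obtained from the reduction above. The key observation is that \emph{any} involution $w\in G$ must equal $z$: since $z$ is dominating, $z\sim w$, so $z,w\in\langle u\rangle$ for some cyclic $\langle u\rangle$; but a cyclic $2$-group has a unique element of order $2$, and both $z$ and $w$ are involutions lying in $\langle u\rangle$, forcing $z=w$. Hence $G$ possesses a unique element of order $2$, equivalently a unique subgroup of order $2$. At this point I would invoke the classical classification of finite $2$-groups with a unique subgroup of order $2$: such a group is either cyclic or generalized quaternion (see, e.g., \cite{scott-group, algebradummitfoote}). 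Since $G$ is assumed non-abelian it is not cyclic, so $G\cong Q_{2^{k}}$, completing the characterization.

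The step I expect to be the genuine obstacle is the last one: reducing dominatability to the single condition ``$G$ has a unique involution'' is elementary once adjacency is phrased via common cyclic subgroups, but converting that condition into $G\cong Q_{2^k}$ rests on the nontrivial classification theorem for $p$-groups with a unique subgroup of order $p$. I would treat that theorem as a citable black box rather than reprove it; everything else is bookkeeping with cyclic $2$-groups and the explicit structure of $Q_{2^k}$.
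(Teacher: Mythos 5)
Your proof is correct, and since the paper itself only imports this lemma as a known result (Theorem 3.3 of \cite{enhancedpwrgrapbb3}) without reproducing a proof, there is no in-paper argument to diverge from; your route is the standard one from the cited literature. Each step checks out: the reduction of an arbitrary dominating vertex $g$ to the dominating involution $z=g^{2^{j-1}}$ is sound (any cyclic $\langle w\rangle$ containing $g$ contains $z$), the forward direction correctly derives that $G$ has a unique involution from the uniqueness of the order-$2$ element in a cyclic $2$-group, and treating the classification of finite $2$-groups with a unique subgroup of order $2$ (cyclic or generalized quaternion, see \cite{scott-group, algebradummitfoote}) as a citable black box is exactly how the source handles it as well.
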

\begin{lemma}[Theorem 4.2, \cite{bera-dey-sajal}]\label{lemm:conn and compo generalized qtrn}
	For $n\geq 3,$ let $Q_{2^k}$ be the generalized quaternion group. Then the vertex connectivity of $\mathcal{G}_E(Q_{2^k})$ is $2.$ Moreover the number of components of $\mathcal{G}^{**}_E(Q_{2^k})$ is $2^{k-2}+1.$ 	
\end{lemma}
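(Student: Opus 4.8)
The plan is to reduce both assertions to the lattice of maximal cyclic subgroups of $Q_{2^k}$, since in any enhanced power graph two distinct vertices $u,v$ are adjacent precisely when $\langle u,v\rangle$ is cyclic, equivalently when $u$ and $v$ lie in a common maximal cyclic subgroup. First I would record the relevant subgroup structure. Writing $z=x^{2^{k-2}}=y^{2}$ for the unique involution, the maximal cyclic subgroups of $Q_{2^k}$ are $\langle x\rangle$ of order $2^{k-1}$ together with the subgroups $\langle x^{a}y\rangle=\{e,\,x^{a}y,\,z,\,x^{a+2^{k-2}}y\}$ of order $4$. The key computation is $(x^{a}y)^{2}=z$ for every $a$, so each element outside $\langle x\rangle$ has order $4$ and lies in exactly one such subgroup; moreover the $2^{k-1}$ coset elements pair up as $\{x^{a}y,\,x^{a+2^{k-2}}y\}$ into exactly $2^{k-2}$ distinct order-$4$ subgroups. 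A crucial intersection fact, following at once from this description, is $\langle x\rangle\cap\langle x^{a}y\rangle=\{e,z\}$.

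Next I would pin down the dominating vertices. Since $z$ lies in every maximal cyclic subgroup, both $e$ and $z$ are adjacent to every other vertex; this is exactly Lemma \ref{dom of gen Q_2^n in enhced}. Deleting them produces $\mathcal{G}^{**}_E(Q_{2^k})$, whose components I would determine directly. The surviving vertices split into $\langle x\rangle\setminus\{e,z\}$ and the $2^{k-1}$ elements $x^{a}y$. Every pair in $\langle x\rangle\setminus\{e,z\}$ is adjacent, as they both lie in $\langle x\rangle$, and by the intersection fact no $x^{b}$ with $b\neq 0,2^{k-2}$ can share a cyclic subgroup with any $x^{a}y$; hence $\langle x\rangle\setminus\{e,z\}$ is one full component (indeed a clique). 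On the other side, each $x^{a}y$ is adjacent only to elements of its unique maximal cyclic subgroup $\langle x^{a}y\rangle$, and after deleting $e,z$ the only survivor there is its partner $x^{a+2^{k-2}}y$; thus each order-$4$ subgroup contributes an isolated edge, with no path able to escape it. Counting one clique together with $2^{k-2}$ edges gives exactly $2^{k-2}+1$ components.

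For the vertex connectivity I would argue from both sides. For the upper bound, the component count just obtained shows that deleting the two vertices $\{e,z\}$ leaves $2^{k-2}+1\geq 2$ components (using $k\geq 3$), so $\mathcal{G}_E(Q_{2^k})$ is disconnected after removing these two vertices, whence $\kappa\leq 2$. For the lower bound, deleting any single vertex still leaves at least one of the two dominating vertices $e,z$ in place, and a graph possessing a dominating vertex is connected; hence no single vertex is a cut vertex and $\kappa\geq 2$. Combining the two bounds yields $\kappa\bigl(\mathcal{G}_E(Q_{2^k})\bigr)=2$.

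The main obstacle is the structural bookkeeping in the first paragraph: verifying that every element outside $\langle x\rangle$ has order $4$, that these $2^{k-1}$ elements collapse into exactly $2^{k-2}$ maximal cyclic subgroups, and, most importantly, that each such order-$4$ subgroup meets $\langle x\rangle$ in precisely $\{e,z\}$. Once this is in place, and given Lemma \ref{dom of gen Q_2^n in enhced} for the dominating vertices, the component count and the connectivity both follow by routine graph-theoretic reasoning.
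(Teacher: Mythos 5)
Your argument is correct and complete. One point of comparison to flag first: this paper does not actually prove the lemma --- it is imported as Theorem 4.2 of the cited reference \cite{bera-dey-sajal} and used as a black box --- so there is no in-paper proof to measure you against; what can be said is that your reasoning is exactly the structural picture the paper relies on implicitly elsewhere (the proof of Theorem \ref{Thm: strong dom enhanced pwr nilpotent grp having Q2k} identifies the components of $\mathcal{G}_E^{**}(Q_{2^k})$ with the $2^{k-2}+1$ cyclic subgroups of order $4$, which in your decomposition corresponds to the clique on $\langle x\rangle\setminus\{e,z\}$, containing the unique order-$4$ subgroup of $\langle x\rangle$, together with the $2^{k-2}$ isolated edges $\{x^ay,\,x^{a+2^{k-2}}y\}$). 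Your bookkeeping checks out: $(x^ay)^2=z$ forces every element outside $\langle x\rangle$ to have order $4$ with $\langle x^ay\rangle$ its unique maximal cyclic subgroup, $\langle x\rangle\cap\langle x^ay\rangle=\{e,z\}$ kills all edges between the two sides once $e$ and $z$ are deleted, and Lemma \ref{dom of gen Q_2^n in enhced} (together with your observation that no vertex outside $\{e,z\}$ is adjacent to everything) justifies that $\mathcal{G}_E^{**}$ deletes precisely $\{e,z\}$. The connectivity sandwich is also sound: $\{e,z\}$ is a cutset since $2^{k-2}+1\geq 2$, and deleting any single vertex leaves one of the two dominating vertices in place, so the remainder is connected, giving $\kappa=2$. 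The only cosmetic remark is that the statement's ``$n\geq 3$'' is a typo for ``$k\geq 3$,'' which is the hypothesis you correctly used in both halves.
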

In \cite{Costanzo-BullAust}, Costanzo et. al pointed out the following necessary and sufficient conditions for characterizing a group with a dominatable enhanced power graph. 
\begin{theorem}[Theorem 1.4, \cite{Costanzo-BullAust}]\label{thm:Costanzo}
	Let $G$ be a group, $g \in G$ and $\pi= \pi(\text{o}(g)).$ Write $g = \prod _{ p \in \pi} g_p,$ where each $g_p$ is a $p$-element for $p \in \pi$ and $g_pg_q = g_qg_p$  for all $p, q \in \pi.$ Then $g$ is a dominating vertex for $\mathcal{G}_E^{*}(G)$ if and only if, for each $p \in \pi,$ a Sylow $p$-subgroup $P$ of $G$ is cyclic or generalized quaternion and $ \langle g_p \rangle \leq P \cap Z(G).$ 
\end{theorem}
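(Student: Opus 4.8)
The plan is to first recast adjacency and domination in purely group-theoretic terms. By Definition~\ref{defn: enhcdpowr graph}, two vertices $u,v$ are adjacent in $\mathcal{G}_E(G)$ exactly when there is a $w$ with $u,v\in\langle w\rangle$, which happens if and only if $\langle u,v\rangle$ is cyclic (take $w$ to be a generator, and conversely). Hence $g$ dominates $\mathcal{G}_E^{*}(G)$ if and only if $\langle g,h\rangle$ is cyclic for every $h\in G$; and since a cyclic group is abelian, this already forces $g\in Z(G)$. So I will work throughout with the equivalent statement: \emph{$g$ is dominating $\iff$ $g\in Z(G)$ and $\langle g,h\rangle$ is cyclic for all $h\in G$.} I will also fix the canonical primary decomposition $g=\prod_{p\in\pi}g_p$, recalling that each $p$-part $g_p$ is a power of $g$; thus the $g_p$ commute pairwise, lie in $\langle g\rangle$, and $\langle g\rangle=\prod_{p\in\pi}\langle g_p\rangle$, with $\text{o}(g_p)$ equal to the $p$-part of $\text{o}(g)$.

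For the forward direction, assume $g$ is dominating. Since $g\in Z(G)$ and $g_p\in\langle g\rangle$, we get $\langle g_p\rangle\le Z(G)$; moreover a central $p$-element lies in every Sylow $p$-subgroup (if $x\in Z(G)$ is a $p$-element and $P$ is Sylow, then $\langle x\rangle P$ is a $p$-group containing $P$, so $\langle x\rangle P=P$). Hence $\langle g_p\rangle\le P\cap Z(G)$ for each Sylow $p$-subgroup $P$. To show $P$ is cyclic or generalized quaternion, I will prove $P$ has a unique subgroup of order $p$ and then invoke the classical theorem that a finite $p$-group with a unique subgroup of order $p$ is cyclic when $p$ is odd, and cyclic or generalized quaternion when $p=2$ (see, e.g., \cite{algebradummitfoote, scott-group}). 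Fix $P\ni g_p$ and let $\langle z\rangle$ be the unique order-$p$ subgroup of the cyclic group $\langle g_p\rangle$. For any $x\in P$ of order $p$, the reformulation gives $\langle g,x\rangle$ cyclic; passing to its $p$-primary component (legitimate because $g,x$ commute) yields $\langle g_p,x\rangle$ cyclic, and a cyclic $p$-group has a unique subgroup of order $p$, which forces $\langle x\rangle=\langle z\rangle$. Thus every element of order $p$ in $P$ lies in $\langle z\rangle$, establishing uniqueness.

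For the converse, assume that for each $p\in\pi$ the Sylow $p$-subgroup is cyclic or generalized quaternion and $\langle g_p\rangle\le P\cap Z(G)$. Then $g=\prod_{p\in\pi}g_p\in Z(G)$, so for any $h\in G$ the group $\langle g,h\rangle$ is abelian, and I will show it is cyclic by examining its primary components. Writing $h=\prod_\ell h_\ell$ and using commutativity, $\langle g,h\rangle=\prod_\ell\langle g_\ell,h_\ell\rangle$ (with the convention $g_\ell=e$ for $\ell\notin\pi$), and an abelian group is cyclic precisely when each primary component is. For $\ell\notin\pi$ the component is $\langle h_\ell\rangle$, which is cyclic. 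For $\ell\in\pi$ the component $\langle g_\ell,h_\ell\rangle$ is an abelian $\ell$-subgroup of $G$, hence sits inside a Sylow $\ell$-subgroup; here I will use the structural fact that every abelian subgroup of a cyclic or generalized quaternion group is cyclic (a generalized quaternion group has a unique involution, so any abelian subgroup has a unique subgroup of order $2$ and is therefore cyclic). Consequently every primary component is cyclic, $\langle g,h\rangle$ is cyclic, and so $g\sim h$ for all $h$, i.e.\ $g$ is dominating.

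The real content concentrates in two places. First, one must correctly isolate the $p$-primary component of $\langle g,h\rangle$, which is where the centrality of $g$ is indispensable: without it $g$ and $h$ need not commute and the primary decomposition is unavailable. Second, the argument rests on the two classical facts about $p$-groups — the classification of $p$-groups possessing a unique subgroup of order $p$ (used in the forward direction, where showing the uniqueness of the order-$p$ subgroup of $P$ is the main step) and the fact that abelian subgroups of $Q_{2^k}$ are cyclic (used in the converse). The only subtlety to track is the split between $p=2$ and odd $p$ in the classification, but both cases are subsumed by the single conclusion ``cyclic or generalized quaternion.''
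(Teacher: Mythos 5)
The paper does not actually prove Theorem~\ref{thm:Costanzo}: it is imported verbatim as Theorem~1.4 of \cite{Costanzo-BullAust} and used downstream only as a black box (to obtain Corollary~\ref{Cor: Dom set for nilpotent grp}), so there is no in-paper proof to compare against. Judged on its own, your argument is correct and complete, and it is the natural classification-based proof of this result. The load-bearing steps all check out: adjacency in $\mathcal{G}_E^{*}(G)$ is equivalent to $\langle u,v\rangle$ being cyclic, so domination forces $g\in Z(G)$; each $g_p$ is a power of $g$, so centrality of $g$ passes to each $\langle g_p\rangle$, and a central $p$-element lies in every Sylow $p$-subgroup; in the forward direction, intersecting the cyclic group $\langle g,x\rangle$ with its $p$-primary part shows every order-$p$ element of $P$ generates the unique order-$p$ subgroup of $\langle g_p\rangle$, whence $P$ is cyclic or generalized quaternion by the classical theorem on $p$-groups with a unique subgroup of order $p$; in the converse, the identity $\langle g,h\rangle=\prod_\ell\langle g_\ell,h_\ell\rangle$ is legitimate because $g$ is central (the $\ell$-part map is a homomorphism on the abelian group $\langle g,h\rangle$), and cyclicity of abelian subgroups of cyclic and generalized quaternion groups finishes the argument. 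Two points worth making explicit in a write-up, though your proof implicitly covers both: (i) $p\in\pi$ means $p$ divides $\text{o}(g)$, so $g_p\neq e$ and $\langle g_p\rangle$ genuinely contains a subgroup of order $p$; (ii) in the converse, $\langle g_\ell,h_\ell\rangle$ lies in some Sylow $\ell$-subgroup $Q$ that need not be the given $P$, but Sylow conjugacy transfers the property of being cyclic or generalized quaternion from $P$ to $Q$. Including your argument would in fact make the paper self-contained at this point, which it currently is not.
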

Let $G_1$ be a nilpotent group such that no Sylow subgroups of $G$ are either cyclic or generalized quaternion. Suppose $e, e''$ are the identities of the groups $G_1, Q_{2^k}$ respectively and $y$ is the unique $2$-ordered element of $Q_{2^k}.$ Let $D_1=\{(e, x, e''): x\in \mathbb{Z}_n\}$ and $D_2=\{(e, x, y): x \in \mathbb{Z}_n\}.$ As a corollary of Theorem \ref{thm:Costanzo} and Lemma \ref{dom of gen Q_2^n in enhced}, the authors \cite{bera-dey-JGT} had the  following characterization of dominating vertices of the enhanced power graph of any nilpotent group. 
\begin{corollary}[Corollary 4.2, \cite{bera-dey-JGT}]\label{Cor: Dom set for nilpotent grp}
	Let $G$ be a finite nilpotent group. Then 
	\begin{equation*}
		\text{Dom}(\mathcal{G}_E(G))= 
		\begin{cases}
			\{e\}, & \text{ if } G=G_1 \\
			\{(e, x): x \in \mathbb{Z}_n\},& \text{ if } G=G_1\times \mathbb{Z}_n\text{ and } \text{gcd}(|G_1|, n)=1 \\ 
			\{(e, e''), (e, y)\}, & \text{ if } G=G_1 \times Q_{2^k}\text{ and } \text{gcd}(|G_1|, 2)=1 \\
			D_2\cup D_3, & \text{ if } G=G_1\times \mathbb{Z}_n\times Q_{2^k} \text{ and } \\&\text{gcd}(|G_1|, n)=\text{gcd}(|G_1|, 2)=\text{gcd}(n, 2)=1.
		\end{cases}
	\end{equation*}
\end{corollary}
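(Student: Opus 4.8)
The plan is to reduce the corollary to the Costanzo characterization (Theorem \ref{thm:Costanzo}), which describes dominating vertices of $\mathcal{G}_E^*(G)$, after first pinning down the (elementary) relationship between $\text{Dom}(\mathcal{G}_E(G))$ and $\text{Dom}(\mathcal{G}_E^*(G))$. First I would record that $\text{Dom}(\mathcal{G}_E(G)) = \{e\}\cup\text{Dom}(\mathcal{G}_E^*(G))$, a disjoint union. Indeed, since $e,v\in\langle v\rangle$ for every $v$, the identity is adjacent to all vertices, so $e\in\text{Dom}(\mathcal{G}_E(G))$; and for a non-identity vertex $v$ the adjacency $v\sim e$ is automatic for the same reason, so $v$ dominates $\mathcal{G}_E(G)$ if and only if it dominates the induced subgraph $\mathcal{G}_E^*(G)$ on $G\setminus\{e\}$. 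This lets me compute $\text{Dom}(\mathcal{G}_E^*(G))$ by Theorem \ref{thm:Costanzo} and then simply adjoin $e$ to recover $\text{Dom}(\mathcal{G}_E(G))$.

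Next I would unpack the Costanzo condition using nilpotence. Writing $G=P_1\times\cdots\times P_s$ as the direct product of its Sylow subgroups gives $Z(G)=Z(P_1)\times\cdots\times Z(P_s)$, and for the Sylow $p_i$-subgroup one has $P_i\cap Z(G)=Z(P_i)$. If $g=(g_1,\dots,g_s)$ then its $p_i$-component $g_{p_i}$ is just the $i$-th coordinate padded with identities, so the requirement ``$\langle g_{p_i}\rangle\le P_i\cap Z(G)$'' becomes the single condition $g_i\in Z(P_i)$, while the requirement ``$P_i$ is cyclic or generalized quaternion'' is a condition on the ambient factor independent of $g$. Hence, by Theorem \ref{thm:Costanzo}, a vertex $g$ lies in $\text{Dom}(\mathcal{G}_E^*(G))$ precisely when, for every index $i$ with $g_i\ne e$, the factor $P_i$ is cyclic or generalized quaternion and $g_i\in Z(P_i)$.

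With this reformulation the four structural cases become routine. If $G=G_1$, no $P_i$ is cyclic or generalized quaternion, so no coordinate of a non-identity element can be ``activated''; thus $\text{Dom}(\mathcal{G}_E^*(G))=\emptyset$ and $\text{Dom}(\mathcal{G}_E(G))=\{e\}$. If $G=G_1\times\mathbb{Z}_n$, the criterion forces the $G_1$-coordinate to be trivial, while the factor $\mathbb{Z}_n$ is cyclic and abelian, so every one of its elements is central and every $(e,x)$ qualifies, giving $\{(e,x):x\in\mathbb{Z}_n\}$. If $G=G_1\times Q_{2^k}$, again the $G_1$-coordinate must be trivial, and the $Q_{2^k}$-coordinate must lie in $Z(Q_{2^k})$; the mixed case $G=G_1\times\mathbb{Z}_n\times Q_{2^k}$ simply superposes the previous two, forcing a trivial $G_1$-coordinate, a free $\mathbb{Z}_n$-coordinate, and a $Q_{2^k}$-coordinate in $Z(Q_{2^k})$, so the dominating set is $\{(e,x,q):x\in\mathbb{Z}_n,\ q\in Z(Q_{2^k})\}=D_1\cup D_2$ (the statement's ``$D_2\cup D_3$'' should read ``$D_1\cup D_2$'').

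The only genuinely non-formal input is the computation $Z(Q_{2^k})=\langle x^{2^{k-2}}\rangle=\{e'',y\}$, the unique subgroup of order $2$, $y$ being the unique involution; this is the step where the quaternion factor behaves differently from a cyclic factor, contributing only the two choices $\{e'',y\}$ rather than all of $\mathbb{Z}_n$, and it is the single place where the four answers truly diverge. This identification is standard and is also consistent with Lemma \ref{dom of gen Q_2^n in enhced}, which records that the identity and the unique order-$2$ element are the dominating vertices of $\mathcal{G}_E(Q_{2^k})$. Feeding $Z(Q_{2^k})=\{e'',y\}$ into the third and fourth cases yields $\{(e,e''),(e,y)\}$ and $D_1\cup D_2$ respectively, completing the characterization; everything else is a mechanical application of Theorem \ref{thm:Costanzo} together with the direct-product description of the center of a nilpotent group.
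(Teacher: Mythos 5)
Your proposal is correct and takes essentially the same route the paper itself indicates: the paper quotes this corollary without proof as a consequence of Theorem \ref{thm:Costanzo} (and Lemma \ref{dom of gen Q_2^n in enhced}), and your argument---reducing via $\text{Dom}(\mathcal{G}_E(G))=\{e\}\cup \text{Dom}(\mathcal{G}_E^{*}(G))$, unpacking the Costanzo condition coordinatewise through the Sylow decomposition of a nilpotent group, and feeding in $Z(Q_{2^k})=\{e'',y\}$---is precisely that derivation carried out in full. You are also right that the statement's ``$D_2\cup D_3$'' is a slip for $D_1\cup D_2$ in the notation the paper fixes just before the corollary.
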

We will now demonstrate several crucial findings that serve as the basis for establishing our primary theorems.
\begin{lemma}\label{lem:no of compo in deleted enhanced p grp}
Let $G$ be a finite $p$-group such that $G$ is neither cyclic nor generalized quaternion group. Then number of components in the graph $\mathcal{G}_E^{**}(G)$ is $r,$ where $r$ is the number of distinct cyclic subgroups of order $p.$
\end{lemma}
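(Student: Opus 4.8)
The plan is to first identify $\mathcal{G}_E^{**}(G)$ with $\mathcal{G}_E^{*}(G)$ and then partition the non-identity vertices according to the unique order-$p$ subgroup sitting inside the cyclic group they generate. To begin, I would argue that for a $p$-group $G$ that is neither cyclic nor generalized quaternion, the identity is the only dominating vertex of $\mathcal{G}_E(G)$: for any non-identity $g$ we have $\pi(\text{o}(g)) = \{p\}$, and the unique Sylow $p$-subgroup is $G$ itself, which by hypothesis is neither cyclic nor generalized quaternion, so Theorem \ref{thm:Costanzo} (equivalently, the first case of Corollary \ref{Cor: Dom set for nilpotent grp}) prevents $g$ from being a dominating vertex. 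Hence $\text{Dom}(\mathcal{G}_E(G)) = \{e\}$, so that $\mathcal{G}_E^{**}(G) = \mathcal{G}_E^{*}(G)$, and it suffices to count the connected components of $\mathcal{G}_E^{*}(G)$.

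Next, to each non-identity $x$ I would attach $\sigma(x)$, the unique subgroup of order $p$ contained in the cyclic group $\langle x \rangle$ (generated by $x^{p^{j-1}}$ when $\text{o}(x) = p^j$). The key elementary observation is that $x$ is adjacent in $\mathcal{G}_E^{*}(G)$ to any generator of $\sigma(x)$, since both lie in $\langle x \rangle$; in particular, every vertex lies in the same component as some element of order $p$. I would then prove that two vertices $x$ and $y$ lie in the same component if and only if $\sigma(x) = \sigma(y)$. The ``if'' direction is immediate: if $\sigma(x) = \sigma(y) = \langle a \rangle$, then $x \sim a \sim y$ exhibits a path joining $x$ and $y$ through the common neighbour $a$.

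For the ``only if'' direction, which I expect to be the real content, I would invoke Lemma \ref{lema: p and p^i orderd path connd abelong< b>}. Choosing generators $a$ of $\sigma(x)$ and $b$ of $\sigma(y)$, any path from $x$ to $y$ extends, via the adjacencies $x \sim a$ and $y \sim b$, to a path between the order-$p$ elements $a$ and $b$; the ``in particular'' clause of that lemma then forces $\langle a \rangle = \langle b \rangle$, i.e.\ $\sigma(x) = \sigma(y)$. This shows that the components of $\mathcal{G}_E^{*}(G)$ are precisely the nonempty fibres of $\sigma$. Since $\sigma(a) = \langle a \rangle$ for every order-$p$ element $a$, the image of $\sigma$ consists of exactly the subgroups of order $p$, and as $G$ is non-cyclic we have $|G| \geq p^2$, so $r \geq 1$ and each such subgroup contributes a genuine component. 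Therefore the number of components equals the number $r$ of distinct cyclic subgroups of order $p$. The only delicate point is the bookkeeping in the ``only if'' direction, ensuring the concatenated path between $a$ and $b$ lives entirely in $\mathcal{G}_E^{*}(G)$, which is automatic once the identity has been deleted.
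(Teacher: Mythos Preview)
Your proposal is correct and follows essentially the same strategy as the paper: partition the non-identity vertices according to the unique order-$p$ subgroup contained in $\langle x\rangle$ (the paper's sets $K_i$ are exactly your fibres $\sigma^{-1}(H_i)$), show each fibre is connected through its order-$p$ element, and show distinct fibres are disconnected. The only cosmetic differences are that you make the identification $\mathcal{G}_E^{**}(G)=\mathcal{G}_E^{*}(G)$ explicit via Corollary~\ref{Cor: Dom set for nilpotent grp}, and you dispatch the ``only if'' direction by quoting Lemma~\ref{lema: p and p^i orderd path connd abelong< b>}, whereas the paper reproves that step directly by locating an edge $a'\sim b'$ crossing the partition and reading off $a_i,a_j\in\langle c\rangle$.
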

\begin{proof}
Let $H_1=\langle a_1\rangle, \cdots, H_r=\langle a_r \rangle$ represent the distinct cyclic subgroups of order $p.$ For every $i\in [r],$ we define $K_i=\{x\in V(\mathcal{G}_E^{**}(G)): a_i\in \langle x\rangle\}.$ It is evident that each $K_i$ is connected (since any $x\in K_i$ is adjacent to $a_i).$ Furthermore, it is clear that $K_i\cap K_j=\emptyset,$ for $i, j\in [r], i\neq j.$ Suppose $a\in K_i$ and $b\in K_j.$ If there exists a path between $a$ and $b,$ then there exist two vertices $a'\in K_i$ and $b'\in K_j$ such that $a'\sim b'$ in $\mathcal{G}_E^{**}(G).$ Consequently, there exists $c\in  V(\mathcal{G}_E^{**}(G))$ such that $a', b'\in \langle c\rangle.$ In other words, $a'=c^{k_1}$ and $b'=c^{k_2}.$ Additionally, $a'\in K_i$ and $b'\in K_j$ imply that $a_i=c^{t_1}$ and $a_j=c^{t_2}.$ Hence, $a_i, a_j\in \langle c\rangle.$ This leads to a contradiction since $\langle a_i \rangle \neq \langle a_j \rangle$ and $\text{o}(a_i)=p=\text{o}(a_j).$ Therefore, each $K_i$ is a component. This concludes the proof.
\end{proof}	
\begin{lemma}\label{lem: no of compo in G1(p grp) times cylic enh pwr graph}
Let $G=G_1\times \mathbb{Z}_n$ be a nilpotent group such that $G_1$ is a $p$-group and $\text{gcd}(|G_1|, n)=1.$ Then number of components in $\mathcal{G}_E^{**}(G)$ is $r,$ where $r$ is the number of distinct $p$-ordered subgroups of $G_1.$
\end{lemma}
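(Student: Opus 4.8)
The plan is to reduce the problem to the $p$-group case already handled in Lemma \ref{lem:no of compo in deleted enhanced p grp}, by exploiting the coprimality $\gcd(|G_1|,n)=1$ to show that the $\mathbb{Z}_n$-coordinate of a vertex never obstructs adjacency. First I would pin down the vertex set of $\mathcal{G}_E^{**}(G)$. Since $G_1$ is a $p$-group which, as part of the paper's standing notation for $G_1$, is neither cyclic nor generalized quaternion, Theorem \ref{thm:Costanzo} (equivalently Corollary \ref{Cor: Dom set for nilpotent grp}) gives $\text{Dom}(\mathcal{G}_E(G))=\{(e,x):x\in\mathbb{Z}_n\}$, where $e$ is the identity of $G_1$. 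Hence $V(\mathcal{G}_E^{**}(G))=(G_1\setminus\{e\})\times\mathbb{Z}_n$.

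The key structural observation is that for any $w=(u,v)\in G$ one has $\langle w\rangle=\langle u\rangle\times\langle v\rangle$, because $\text{o}(u)$ and $\text{o}(v)$ are coprime. Consequently two vertices $(a,b)$ and $(a',b')$ lie in a common cyclic subgroup of $G$ iff there is $u\in G_1$ with $a,a'\in\langle u\rangle$ and some $v\in\mathbb{Z}_n$ with $b,b'\in\langle v\rangle$; but the latter is automatic since $\mathbb{Z}_n$ is cyclic. Thus $(a,b)\sim(a',b')$ in $\mathcal{G}_E^{**}(G)$ precisely when $a,a'$ lie in a common cyclic subgroup of $G_1$, with the second coordinate imposing no constraint.

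Next I would let $H_1=\langle a_1\rangle,\dots,H_r=\langle a_r\rangle$ be the distinct subgroups of order $p$ in $G_1$ and set $K_i=\{(a,b)\in V(\mathcal{G}_E^{**}(G)):a_i\in\langle a\rangle\}$ for each $i\in[r]$. Since any $a\neq e$ generates a nontrivial cyclic $p$-group, which contains a unique subgroup of order $p$, the sets $K_1,\dots,K_r$ partition the vertex set. Each $K_i$ is connected because every $(a,b)\in K_i$ is adjacent to (or equal to) the fixed vertex $(a_i,e')$, where $e'$ is the identity of $\mathbb{Z}_n$: indeed $a_i\in\langle a\rangle$ and $e'\in\langle b\rangle$, so both vertices lie in $\langle(a,b)\rangle$, and in fact $(a_i,e')$ is itself a power of $(a,b)$ by the Chinese Remainder Theorem.

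Finally I would rule out edges between distinct blocks, which is the crux of the argument and the analogue of the contradiction step in Lemma \ref{lem:no of compo in deleted enhanced p grp}. If $(a,b)\in K_i$ and $(a',b')\in K_j$ were adjacent, then by the reduction above there is $u\in G_1$ with $a,a'\in\langle u\rangle$; since $\langle u\rangle$ is a cyclic $p$-group it has a unique subgroup of order $p$, which must simultaneously be the order-$p$ subgroup of $\langle a\rangle$ (namely $H_i$) and of $\langle a'\rangle$ (namely $H_j$), forcing $i=j$. Hence no edge joins different $K_i$'s, so the components are exactly $K_1,\dots,K_r$ and their number is $r$. The main obstacle is organising the coprime-order bookkeeping cleanly so that the reduction in the second paragraph is fully rigorous; once that is in place, the component count follows the pure $p$-group case essentially verbatim.
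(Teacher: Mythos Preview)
Your proof is correct and follows essentially the same approach as the paper: partition the vertex set into blocks $K_i$ indexed by the order-$p$ subgroups of $G_1$, show each $K_i$ is connected because every vertex is adjacent to $(a_i,e')$, and rule out cross-block edges via the unique order-$p$ subgroup of a cyclic $p$-group. The only cosmetic difference is that you package the coprimality via the structural identity $\langle(u,v)\rangle=\langle u\rangle\times\langle v\rangle$ and the Chinese Remainder Theorem, whereas the paper carries out the explicit Euler's-formula computation $(a,b)^{n^{\phi(m)}}=(a,e')$ to exhibit $(a_i,e')$ as a power of $(a,b)$.
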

\begin{proof}
Consider $H_1=\langle a_1\rangle, \cdots, H_r=\langle a_r \rangle$ as the different cyclic subgroups of order $p$ of $G_1.$ For each $i\in [r],$ let's define $K_i=\{x\in  V(\mathcal{G}_E^{**}(G_1)): a_i\in \langle x\rangle\}.$ Our assertion is that $K_1\times \mathbb{Z}_n, \cdots, K_{r}\times \mathbb{Z}_n$ are distinct components of the graph $\mathcal{G}_{E}^{**}(G).$ Suppose $(a, b)\in K_i\times \mathbb{Z}_n.$ We will prove that $(a, b)\sim (a_i, e'),$ where $e'$ is the identity of the group $\mathbb{Z}_n.$ Let $\text{o}(a)=m.$ Now, if $\text{gcd}(m, n)=1,$ then $n^{\phi(m)}=mk+1,$ for some $k\in \mathbb{N}$ (according to Euler's formula). Therefore,
\begin{align*}
(a, b)^{n^{\phi(m)}}&=(a^{mk+1}, e')\\
		&=(a, e').
\end{align*}
Thus, $(a, e')\in \langle (a, b)\rangle.$ Moreover, since $a, a_i\in K_i$ and $\text{o}(a_i)=p,$ we have $a_i\in \langle a\rangle.$ Consequently, $(a_i, e')\in \langle (a, e')\rangle.$ As a result, $(a_i, e')\in \langle (a, b)\rangle.$ Therefore, $(a, b)\sim (a_i, e').$ Hence, $K_i\times \mathbb{Z}_n$ is connected for each $i\in [r].$ It is clear that $(K_i\times \mathbb{Z}_n)\cap (K_j\times \mathbb{Z}_n)=\emptyset,$ for $i\neq j.$ We will now demonstrate that there is no edge between any two vertices in $K_i\times \mathbb{Z}_n$ and $K_j\times \mathbb{Z}_n.$ Consider $(a, b)\in K_i\times \mathbb{Z}_n$ and $(a', b')\in K_i\times \mathbb{Z}_n,$ where $(a, b)\sim (a', b').$ This implies that there exists $(c, d)\in V(\mathcal{G}_E^{**}(G))$ such that $(a, b), (a', b')\in \langle (c, d)\rangle.$ Therefore, we have $a=c^{k_1}$ and $a'=c^{k_2}.$ Additionally, since $a\in K_i$ and $a'\in K_j,$ it follows that $a_i\in \langle a\rangle$ and $a_j\in \langle a'\rangle.$ Consequently,
\begin{align*}
	a_i&=a^{r_1}=c^{k_1r_1} \text{ and }\\
	a_j&=(a')^{r_2}=c^{k_2r_2}.
\end{align*}
This indicates that $a_i$ and $a_j$ belong to the cyclic subgroup $\langle c\rangle$, which contradicts our initial assumption that $\langle a_i\rangle$ and $\langle a_j\rangle$ are two distinct cyclic subgroups of order $p.$ 
\end{proof}
\section{Proofs of main theorems}\label{Se: proofs of main results}
\begin{proof}[Proof of Theorem \ref{Thm: existence of srong dom set in enhd pwr grph}]
Consider a $2$-group $G$ with an element of order $2$ that is not contained in any other proper subgroup of order $2^k$, where $k\geq 2$. Let $H_1=\langle a_1\rangle, \cdots, H_r=\langle a_r \rangle, (r\geq 3, \text{ see Lemma 3.10, \cite{bera-dey-JGT}})$ be the distinct cyclic subgroups of order $2$. For each $H_i$, we define the set $K_i$ as follows:
\[K_i=\{x\in V(\mathcal{G}_E^{**}(G)): a_i\in \langle x\rangle \}, \text{ for each } i \in [r].\] It is important to note that for $i\neq j$, $K_i\cap K_j=\emptyset$ and $V(\mathcal{G}_E^{**}(G))=\cup_{i=1}^r K_i.$ We assume that $a_1$ is not contained in any other proper subgroup of order $2^k, k\geq 2.$ Therefore, $K_1=\{a_1\}.$ Let $D$ be a total dominating set of the graph $\mathcal{G}_E^{**}(G)$. Then there exists an element $b\in D$ such that $b\sim a_1$, meaning that $a_1\in \langle b\rangle.$ However, it is given that $a_1$ is not contained in any other proper subgroup of order $2^k, k\geq 2.$ Hence, $a_1\in D.$ Now, since $K_i\cap K_j=\emptyset$, we can conclude that $|D|\geq 2.$ Let $c\in D$ such that $c\neq a_1.$ In this case, it can be shown that $a_1$ is not adjacent to $c.$ This contradicts the assumption that $D$ is a total dominating set.
	
To address the converse aspect, it is evident that if $G$ fails to meet any of the specified conditions, then it is certain that $|K_i|\geq 2$ for every $i\in [r]$. In such a scenario, we introduce $D'=\{a_{1}, a_1', a_2, a_2', \cdots, a_r, a_r'\}$, where $a_i, a_i'\in K_i$. It is clear that $D'$ serves as a total dominating set. Thus, the proof is concluded.	 	
\end{proof}
Now, based on the organization of a nilpotent group, we shall establish the subsequent propositions before proceeding to demonstrate the remaining theorems. 
\begin{proposition}\label{Prop: strong dom geq r1+1,G1 and g1 times cyclic, enhanced pwr}
Let $G$ be a finite nilpotent group that does not have a Sylow subgroup which is generalized quaternion. Then 
	\begin{equation*}
		\gamma^{\text{strong}}(\mathcal{G}_E^{**}(G))\geq
		\begin{cases}
			0, & \text{ if } G=\mathbb{Z}_n\\
			\left(\displaystyle \min _{ 1 \leq i \leq m}  r_i\right)+1, & \text{ if either } G=G_1, \text{ where } G_1=P_1\times \cdots \times P_m, m\geq 1\\& \text{ or } G=G_1\times \mathbb{Z}_n, \text{  and } \text{gcd}(|G_1|, n)=1.\\
		\end{cases}
	\end{equation*} 	
\end{proposition}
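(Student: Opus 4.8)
The plan is to show that every total dominating set $D$ of $\mathcal{G}_E^{**}(G)$ satisfies $|D|\ge \min_i r_i+1$ (the case $G=\mathbb{Z}_n$ is immediate: then $\mathcal{G}_E(G)$ is complete by Lemma \ref{lemma:enhd coplte iff cyclic}, so $\mathcal{G}_E^{**}(G)$ is empty and $\gamma^{\text{strong}}=0\ge 0$). Relabel the primes so that $r_1=\min_{1\le i\le m}r_i$, and let $\langle a_1^{(1)}\rangle,\dots,\langle a_1^{(r_1)}\rangle$ be the distinct subgroups of order $p_1$ in $P_1$. Writing each vertex by its coordinates in $P_1\times\cdots\times P_m$ (with the harmless extra $\mathbb{Z}_n$-coordinate when present), I would partition $V(\mathcal{G}_E^{**}(G))$ into the \emph{hub} $N_0=\{v:\ v_1=e\}$ and the \emph{islands} $N_s=\{v:\ \langle a_1^{(s)}\rangle\subseteq\langle v_1\rangle\}$ for $s\in[r_1]$; this is a genuine partition because a nontrivial cyclic $p_1$-group $\langle v_1\rangle$ contains a unique subgroup of order $p_1$. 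Two structural facts drive the proof: (i) there is no edge between $N_s$ and $N_t$ for $s\neq t$, since a common cyclic overgroup would force its $P_1$-component to contain two distinct subgroups of order $p_1$; and (ii) a hub vertex $u$ and an island vertex $g$ are adjacent iff for every coordinate $i\ge 2$ the elements $u_i$ and $g_i$ lie in a common cyclic subgroup of $P_i$ — this comes from the coordinatewise description of cyclic subgroups in a nilpotent group, together with Lemma \ref{lemma:any_odtwo_commuting_elements_of_prime_order_adjacent} for the coprime coordinates.

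Next I would establish the preliminary bound $|D|\ge r_1$ by a dichotomy. If $D$ meets every island, then $|D|\ge\sum_{s=1}^{r_1}|D\cap N_s|\ge r_1$. Otherwise some island $N_{s_0}$ is disjoint from $D$; by (i) the neighbours of any vertex of $N_{s_0}$ lie in $N_{s_0}\cup N_0$, so all of $N_{s_0}$ must be dominated from the hub. Applying this to the \emph{fully mixed} family $\{(a_1^{(s_0)},b_2,\dots,b_m):\ \mathrm{o}(b_i)=p_i\}\subseteq N_{s_0}$ and specialising (ii) (where $g_i=b_i$ has order $p_i$), a hub vertex $u$ dominates such an element exactly when, for every $i\ge 2$, either $u_i=e$ or $\langle b_i\rangle\subseteq\langle u_i\rangle$. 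Covering all choices $(b_2,\dots,b_m)$ is therefore a covering of $\prod_{i\ge 2}\{\text{order-}p_i\text{ subgroups of }P_i\}$ by ``cylinders'' (each $u$ pins down the subgroup in some coordinates and leaves the rest free). A volume count — each cylinder covers at most a fraction $1/\min_{i\ge2}r_i$ of the product — shows the cover needs at least $\min_{i\ge2}r_i=r_2\ge r_1$ cylinders, whence $|D\cap N_0|\ge r_2\ge r_1$ and again $|D|\ge r_1$.

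It remains to exclude $|D|=r_1$, which furnishes the extra $+1$. If $|D|=r_1$ and $D$ meets every island, then necessarily $|D\cap N_s|=1$ for all $s$ and $D\cap N_0=\varnothing$; the unique vertex $d_s\in D\cap N_s$ then has all its neighbours in $N_s\cup N_0$, so its only candidate neighbour in $D$ is $d_s$ itself, and $d_s$ is not totally dominated. If instead some island misses $D$, the inequalities of the previous paragraph collapse to $|D\cap N_0|=r_1=r_2$ and $D\subseteq N_0$; the equality case of the volume count forces the cover to consist of $r_1$ pairwise disjoint maximal cylinders, hence $r_1$ single-coordinate hub vertices pinning a fixed coordinate $i^*$ to the $r_1=r_{i^*}$ distinct subgroups of order $p_{i^*}$. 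Since their $P_{i^*}$-components generate distinct order-$p_{i^*}$ subgroups, these $r_1$ hub vertices are pairwise non-adjacent, so again no vertex of $D$ has a neighbour in $D$. In both cases $D$ fails to be a total dominating set, so $|D|\ge r_1+1$, as required.

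The hardest step is the middle one: the cylinder-covering bound needs care for $m\ge 3$, where a hub vertex may be trivial in some coordinates and so dominate an entire ``slab'' of the mixed family at once; one must verify that pinning several coordinates only shrinks what a vertex covers, so that single-coordinate cylinders are optimal and the minimum cover has size exactly $\min_{i\ge2}r_i$, together with the disjointness in the equality case that pins all optimal cylinders to one coordinate. Two routine but necessary checks accompany this: that the fully mixed elements and the single-coordinate hub vertices are genuine (non-dominating) vertices of $\mathcal{G}_E^{**}(G)$, which is handled by the description of $\mathrm{Dom}(\mathcal{G}_E(G))$ in Corollary \ref{Cor: Dom set for nilpotent grp}; and that adjacency in $\mathcal{G}_E(G)$ decomposes coordinatewise, which is where the coprimality of the Sylow orders and the hypothesis that no Sylow subgroup is cyclic or generalized quaternion (so each $P_i$ is noncyclic, giving $r_i\ge p_i+1\ge 3$) are used. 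Finally, note that the bound obtained is only $r_1+1$, not the value $2r$ valid for a $p$-group: when $m=1$ the hub $N_0$ is empty, so the first branch of the dichotomy already delivers $r_1+1$, which is all the proposition asserts.
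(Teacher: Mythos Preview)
Your proof is correct and takes a genuinely different route from the paper's. The paper argues directly with the test vertices $(a_i,e_2,\ldots,e_m)$: for each $i$ it picks a neighbour $(x_{i1},\ldots,x_{im})\in D$, asserts that $a_i\in\langle x_{i1}\rangle$, deduces that these give $r_1$ distinct elements of $D$, and then shows these $r_1$ elements are pairwise non-adjacent, forcing $|D|>r_1$. Your hub/island decomposition instead isolates the possibility that a $D$-element has trivial $P_1$-coordinate --- a case the paper does not explicitly treat, even though a hub vertex $(e_1,x_2,\ldots,x_m)$ is adjacent to \emph{every} test vertex $(a_i,e_2,\ldots,e_m)$ and could in principle dominate all of them at once. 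Your covering/volume argument handles this: if some island is missed by $D$, a count over tuples of prime-order subgroups in the remaining Sylow factors gives $|D\cap N_0|\ge r_2\ge r_1$, and the equality analysis (maximal single-coordinate cylinders, forced to a common coordinate by disjointness) recovers pairwise non-adjacency of $D$. The paper's approach is shorter if one tacitly assumes the chosen $D$-neighbours have nontrivial first coordinate; your approach is self-contained, closes that gap, and also makes transparent why the bound drops from $2r_1$ to $r_1+1$ once $m\ge 2$.
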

\begin{proof}
	First, let's consider the case where $G=\mathbb{Z}_n.$ In this case, $\mathcal{G}^{**}_E(G)$ is an empty graph, as stated in Lemma \ref{lemma:enhd coplte iff cyclic}. Therefore, it follows vacuously that $\gamma^{\text{strong}}(\mathcal{G}_E^{**}(\mathbb{Z}_n))\geq 0.$
	
	Now, let $G=G_1=P_1$ is a $p_1$-group and $r_1$ is the number of distinct $p_1$-ordered subgroups of $P_1.$ By Lemma \ref{lem:no of compo in deleted enhanced p grp} each $K_i$ is a component. Therefore, for each $a_i\in K_i,$ we have to choose another vertex say $a_i'$ from $K_i$ and clearly, $a_i\sim a_i'.$ So, $\gamma^{\text{strong}}(\mathcal{G}_{E}^{**} (G))\geq 2r_1.$ Therefore, $\gamma^{\text{strong}}(\mathcal{G}_{E}^{**} (G))\geq r_1+1.$
	
	In this portion, we consider that $G=G_1=P_1\times \cdots \times P_m,$ where $m\geq 2$ and no $P_i$ is either cyclic or generalized quaternion. For each $i\in [m],$ let $r_i$ be the number of distinct $p_i$-ordered cyclic subgroups of $G,$ and assume that $r_1\leq r_2\leq \cdots\leq r_m.$ Suppose \[D=\{(x_{11}, x_{12}, \cdots, x_{1m}), (x_{21}, x_{22}, \cdots, x_{2m}), \cdots, (x_{t1}, x_{t2}, \cdots, x_{tm})\}\] be a total dominating set of the graph $\mathcal{G}_{E}^{**} (G_1).$ Let $H_1=\langle a_1\rangle, \cdots, H_{r_1}=\langle a_{r_1} \rangle$ be the distinct cyclic subgroups of order $p_1.$ Now, for each $i\in [r_1], (a_i, e_2, \cdots, e_m)\in V(\mathcal{G}_{E}^{**} (G_1)),$ where $a_i$ is a generator of $H_i.$ Since, $D$ is a total dominating set then there exists a vertex say $(x_{i1}, x_{i2}, \cdots, x_{im})\in D$ such that $(x_{i1}, x_{i2}, \cdots, x_{im})\sim (a_i, e_2, \cdots, e_m),$ where $e_i$ is the identity of the subgroup $P_i.$ So, there exists $(y_{i1}, y_{i2}, \cdots, y_{im})\in V(\mathcal{G}_{E}^{**} (G_1))$ such that \[(x_{i1}, x_{i2}, \cdots, x_{im}), (a_i, e_2, \cdots, e_m)\in \langle (y_{i1}, y_{i2}, \cdots, y_{im})\rangle.\] That is; $x_{i1}, a_i\in \langle y_{i1}\rangle.$ Therefore, for each $i\in [r_1]$  $a_i\in \langle x_{i1}\rangle$ as $\text{o}(a_i)=p_1.$ Again, there are $r_1$ distinct $p_1$-ordered subgroups, and for distinct $i, j\in [r_1], \text{o}(a_i)=p_1=\text{o}(a_j)$ and $\langle a_i\rangle\neq \langle a_j\rangle.$ Consequently, $t\geq r_1$ (moreover, it is clear that $x_{i1}\neq x_{j1}$ for distinct $i, j\in [r_1]).$ Now, we show that $t>r_1.$ Let $t=r_1.$ Since, $D$ is total dominating set $\exists$ $(x_{j1}, x_{j2}, \cdots, x_{jm})\in D$ such that 
	$(x_{11}, x_{12}, \cdots, x_{1m})\sim (x_{j1}, x_{j2}, \cdots, x_{jm}), (\text{ clearly, } x_{11}\neq x_{j1}.)$ Therefore, \[(x_{11}, x_{12}, \cdots, x_{1m}), (x_{j1}, x_{j2}, \cdots, x_{jm})\in \langle(y_{s1}, y_{s2}, \cdots, y_{sm})\rangle,\]
	for some $(y_{s1}, y_{s2}, \cdots, y_{sm})\in V(\mathcal{G}_{E}^{**} (G_1)).$ So, $x_{11}, x_{j1}\in \langle y_{s1}\rangle.$ Now, $a_1\in \langle x_{11}\rangle$ and $a_j\in \langle x_{j1}\rangle$ both imply that $a_1, a_j\in \langle y_{s1}\rangle,$ a contradiction (as $\text{o}(a_i)=p_1=\text{o}(a_j)$ and $\langle a_i\rangle\neq \langle a_j\rangle$). Therefore, there is no vertex $(x_{j1}, x_{j2}, \cdots, x_{jk})$ in $D$ such that \[(x_{11}, x_{12}, \cdots, x_{1m})\sim (x_{j1}, x_{j2}, \cdots, x_{jm}).\] Consequently, $t\geq r_1+1.$ That is; 
	\begin{equation}\label{eqn: strong dom of G1 geq r1+1}
		\gamma^{\text{strong}}(\mathcal{G}_E^{**}(G_1))\geq \left(\displaystyle \min _{ 1 \leq i \leq m}  r_i\right)+1.
	\end{equation}
	Upon considering the scenario where $G=G_1\times \mathbb{Z}_n,$ we can demonstrate, in a similar manner as mentioned earlier, that \[\gamma^{\text{strong}}(\mathcal{G}_E^{**}(G_1))\geq \left(\displaystyle \min _{ 1 \leq i \leq m} r_i\right)+1.\] Thus, the proof is concluded.
\end{proof}
\begin{proposition}\label{Prop: strong dom enhanced pwr nilpotent grp having Q2k}
	Let $G$ be a finite nilpotent group having a Sylow subgroup which is generalized quaternion. Then 
	\begin{equation*}
		\gamma^{\text{strong}}(\mathcal{G}_E^{**}(G))\geq
		\begin{cases}
			\text{ min }\{r_1, 2^{k-2}+1\}, & \text{ if either } G=G_1\times \mathbb{Z}_n\times Q_{2^k} \text{ and } \\&\text{gcd}(|G_1|, n)=\text{gcd}(|G_1|, 2)=\text{gcd}(n, 2)=1\\& \text{ or } G=G_1 \times Q_{2^k}\text{ and } \text{gcd}(|G_1|, 2)=1\\
			2^{k-2}+1, & \text{ if either } G=Q_{2^k} \text{ or }\\& 
			G=\mathbb{Z}_n\times Q_{2^k} \text{ and } \text{gcd}(n, 2)=1 \\
		\end{cases}
	\end{equation*} 	
\end{proposition}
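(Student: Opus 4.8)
The plan is to prove each lower bound by producing an \emph{open packing}, i.e.\ a set $T$ of vertices of $\mathcal{G}_E^{**}(G)$ whose open neighbourhoods $\text{nbd}(t)$, $t\in T$, are pairwise disjoint. If $D$ is any total (strongly) dominating set, then each $t\in T$ must have a neighbour in $D$, i.e.\ $D\cap \text{nbd}(t)\neq\emptyset$; as the sets $\text{nbd}(t)$ are disjoint these contributions are distinct, so $|D|\ge |T|$ and hence $\gamma^{\text{strong}}(\mathcal{G}_E^{**}(G))\ge |T|$. The whole argument thus reduces to exhibiting a packing of the claimed size in each case. Two structural facts drive the disjointness computations. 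First, in a $p$-group $P$ a nonidentity element lies in a common cyclic subgroup with exactly one subgroup of order $p$ (the unique order-$p$ subgroup of the cyclic group it generates); this is the content behind Lemma \ref{lema: p and p^i orderd path connd abelong< b>}, and it forces: if $a,a'$ generate \emph{distinct} order-$p$ subgroups, no nonidentity element shares a cyclic subgroup with both. Second, the maximal cyclic subgroups of $Q_{2^k}$ are $\langle x\rangle$ together with the $2^{k-2}$ order-$4$ subgroups $\langle x^ay\rangle$, any two of which meet only in $\{e'',z\}$ ($z$ the unique involution); these $2^{k-2}+1$ pieces are exactly the components of Lemma \ref{lemm:conn and compo generalized qtrn}. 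I also use the coprime-product principle: when $\gcd(|A|,|B|)=1$, two elements of $A\times B$ share a cyclic subgroup if and only if their $A$-parts do in $A$ and their $B$-parts do in $B$.

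For the pure cases $G=Q_{2^k}$ and $G=\mathbb{Z}_n\times Q_{2^k}$, I would take $T$ to consist of $x$ together with one representative $x^{a_i}y$ from each of the $2^{k-2}$ order-$4$ subgroups (placing the identity in the $\mathbb{Z}_n$-coordinate when present), so $|T|=2^{k-2}+1$. By the second structural fact, any common neighbour of two members of $T$ must have its $Q_{2^k}$-coordinate in $\{e'',z\}$; but by Lemma \ref{dom of gen Q_2^n in enhced} and Corollary \ref{Cor: Dom set for nilpotent grp} every such element is a dominating vertex, hence deleted, so it is not a vertex of $\mathcal{G}_E^{**}(G)$. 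Thus the open neighbourhoods are disjoint and $\gamma^{\text{strong}}(\mathcal{G}_E^{**}(G))\ge 2^{k-2}+1$.

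For the product cases $G=G_1\times Q_{2^k}$ and $G=G_1\times\mathbb{Z}_n\times Q_{2^k}$, set $t=\min\{r_1,2^{k-2}+1\}$ and build a \emph{diagonal} packing $\pi_1,\dots,\pi_t$: in $\pi_i$ choose, in every Sylow factor $P_\ell$ of $G_1$, a generator of the $i$-th order-$p_\ell$ subgroup (possible since $t\le r_1\le r_\ell$), put the identity in the $\mathbb{Z}_n$-coordinate, and take the $i$-th quaternion piece in the $Q_{2^k}$-coordinate (possible since $t\le 2^{k-2}+1$). Each $\pi_i$ is a genuine vertex since its $G_1$-part is nontrivial, so it avoids $\text{Dom}(\mathcal{G}_E(G))$. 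Applying the coprime-product principle coordinatewise, a common neighbour of $\pi_i,\pi_j$ with $i\ne j$ would have to share a cyclic subgroup, in each $P_\ell$, with two distinct order-$p_\ell$ subgroups (forcing it trivial in every factor of $G_1$ by the first fact) and, in $Q_{2^k}$, with two distinct pieces (forcing its $Q_{2^k}$-coordinate into $\{e'',z\}$ by the second fact). Such an element lies in $\text{Dom}(\mathcal{G}_E(G))$ of Corollary \ref{Cor: Dom set for nilpotent grp} and has been deleted; hence no common neighbour exists, the $\pi_i$ form an open packing, and $\gamma^{\text{strong}}(\mathcal{G}_E^{**}(G))\ge t=\min\{r_1,2^{k-2}+1\}$.

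The step I expect to be the main obstacle is exactly this disjointness verification in the product case. The danger is the ``connector'' vertices such as $(g,z)$ with $g\ne e$, or $(e,w)$ with $w\notin\{e'',z\}$, which are adjacent to very large sets and threaten to be common neighbours; indeed a naive packing spread only over $P_1$ and the quaternion pieces genuinely fails once $G_1$ has several Sylow factors, because an element trivial in $P_1$ but nontrivial in some other $P_\ell$ survives the deletion and joins two packing vertices. Spreading each $\pi_i$ simultaneously over \emph{all} Sylow factors is precisely what forces every candidate common neighbour into the deleted set $\text{Dom}(\mathcal{G}_E(G))$, and checking this factor by factor — together with confirming that the count of simultaneously available distinct representatives is governed by $\min\{r_1,2^{k-2}+1\}$ — is the technical heart of the proof.
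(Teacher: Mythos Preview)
Your argument is correct, and it reaches the stated bound by a route that is genuinely different from the paper's. The paper does \emph{not} build an open packing: its probe vertices have only a single nontrivial coordinate, namely $(a_i,e_2,\dots,e_m,e',e'')$ with $a_i$ ranging over generators of the distinct $p_1$-order subgroups (or, symmetrically, $(e_1,\dots,e_m,e',b_i)$ with the $b_i$ generating the distinct order-$4$ subgroups of $Q_{2^k}$). These probes \emph{do} share common neighbours in $\mathcal{G}_E^{**}(G)$ --- for instance any $(e_1,x,\dots)$ with $x\in P_2\setminus\{e_2\}$ is adjacent to every $(a_i,e_2,\dots)$ --- so the paper cannot argue via disjoint neighbourhoods. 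Instead it fixes a total dominating set $D$, picks for each probe a witness $d_i\in D$ adjacent to it, and shows that the \emph{first coordinate} of $d_i$ must contain $a_i$; since the $\langle a_i\rangle$ are pairwise distinct $p_1$-order subgroups, the first coordinates of the $d_i$ are pairwise distinct, whence $|D|\ge r_1$. The paper then squeezes out an extra $+1$ by observing that the set $\{d_1,\dots,d_{r_1}\}$ is independent, so at least one further vertex of $D$ is needed to dominate them.

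Your diagonal packing is more elegant and fits the standard open-packing/total-domination inequality cleanly, at the cost of the extra work of spreading each $\pi_i$ over \emph{all} Sylow factors (you correctly diagnose why this is necessary). The paper's coordinate-projection argument is more ad hoc but uses simpler probe vertices and, as a by-product, already yields the sharper bound $\min\{r_1,2^{k-2}+1\}+1$ that is ultimately needed in Theorem~\ref{Thm: strong dom enhanced pwr nilpotent grp having Q2k}; your packing only gives $\min\{r_1,2^{k-2}+1\}$, which matches the proposition as stated but would require a separate step to recover the $+1$.
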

\begin{proof}
First suppose that $G=G_1\times \mathbb{Z}_n\times Q_{2^k}.$ Let \[D=\{(x_{11},\cdots, x_{1m},g_1, y_1), \cdots, (x_{t1},\cdots, x_{tm}, g_t,  y_t)\}\] be a total dominating set of the graph $\mathcal{G}_E^{**}(G).$ Suppose $\text{min}\{r_1, 2^{k-2}+1\}=r_1.$ Let $H_1=\langle a_{11}\rangle, \cdots, H_{r_1}=\langle a_{r_11}\rangle$ be the number of distinct cyclic subgroups of order $p_1$ in $P_1.$ Since $D$ is a dominating set, then $\exists$ $(x_{i1},\cdots, x_{im}, g_i, y_i)\in D$ such that \[(a_i, e_2, \cdots, e_m, e', e'')\sim (x_{i1},\cdots, x_{im}, g_i, y_i).\] So, $\exists$ $(x_{s1},\cdots, x_{sm}, g_s, y_s)\in V(\mathcal{G}_E^{**}(G_1\times\mathbb{Z}_n\times Q_{2^k}))$ such that \[(a_{i1}, e_2, \cdots, e_m, e', e''), (x_{i1},\cdots, x_{im}, g_i, y_i)\in \langle (x_{s1},\cdots, x_{sm}, g_s, y_s)\rangle.\] It follows that $a_{i1}, x_{i1}\in \langle x_{s1}\rangle.$ Now, $P_1$ is a $p_1$-group and $a_{i1}, x_{i1}\in \langle x_{s1}\rangle,$ and $\text{o}(a_{i1})=p_1.$ Therefore, $a_{i1}\in \langle x_{i1}\rangle.$ As a result, $t\geq r_1,$ as the number of distinct $p_1$-ordered cyclic subgroups in $P_1$ is $r_1.$ Now, we consider the set \[D'=\{(x_{i1}, \cdots, x_{im}, g_i, y_i)\in D: a_{i1}\in \langle x_{i1}\rangle\}, \text{ for } i\in [r_1].\] Our assertion is that $D'$ forms an independent set. Let's assume that \[(x_{i1}, \cdots, x_{im}, y_i), (x_{j1}, \cdots, x_{jm}, y_j)\in D',\] such that $(x_{i1}, \cdots, x_{im}, g_i, y_i)\sim (x_{j1}, \cdots, x_{jm}, g_j, y_j).$ So, \[\exists (x_{q1}, \cdots, x_{qm}, g_q, y_q)\in V(\mathcal{G}_E^{**}(G_1\times \mathbb{Z}_n\times Q_{2^k}))\] such that $x_{i1}, x_{j1}\in \langle x_{q1}\rangle.$ It follows that either $x_{i1}\in \langle x_{j1}\rangle$ or $x_{j1}\in \langle x_{i1}\rangle.$ If $x_{i1}\in \langle x_{j1}\rangle,$ then $a_{i1}\in \langle x_{j1}\rangle,$ a contradiction as $a_{j1}\in \langle x_{j1}\rangle$ and $\text{o}(a_{i1})=p_1=\text{o}(a_{j1})$ but $\langle a_{i1}\rangle\neq \langle a_{j1}\rangle.$ Likewise, if $x_{j1}\in \langle x_{i1}\rangle,$ then we can also obtain a contradiction. Consequently, $t\geq r_1.$ In the event that $\text{min}\{r_1, 2^{k-2}+1\}=2^{k-2}+1,$ we use $T_1=\langle b_1\rangle, \cdots, T_{2^{k-2}+1}=\langle b_{2^{k-2}+1}\rangle$ for $H_1=\langle a_{11}\rangle, \cdots, H_{r_1}=\langle a_{r_11}\rangle$ and similarly establish that $t\geq 2^{k-2}+1,$ where for each $i\in [2^{k-2}+1], \text{o}(b_i)=4$ and $\langle b_i\rangle\neq \langle b_j\rangle.$ Thus, $t\geq \text{min}\{r_1, 2^{k-2}+1\}+1.$

In the same manner, we can verify the case when $G=G_1\times Q_{2^k}.$

Now, we consider the case $G=\mathbb{Z}_n\times Q_{2^k},$ where $\text{gcd}(n, 2)=1.$ Let \[D=\{(x_1, y_1), \cdots, (x_t, y_t)\}\] be a total dominating set of $\mathcal{G}_E^{**}(\mathbb{Z}_n\times Q_{2^k}).$ Let $\text{o}(y_i)=2^m,$ then $m\geq 2,$ for all $i\in [t],$ otherwise by Corollary \ref{Cor: Dom set for nilpotent grp}, $(x_i, y_i)$ would be a dominating vertex of $\mathcal{G}_E(\mathbb{Z}_n\times Q_{2^k}).$ Let $T_1=\langle b_1\rangle, \cdots, T_{2^{k-2}+1}=\langle b_{2^{k-2}+1}\rangle$ be the distinct $4$-ordered subgroups of $Q_{2^k}.$ Since $D$ is a total dominating set, then for each $(e, b_i)\in V(\mathcal{G}_E^{**}(\mathbb{Z}_n\times Q_{2^k}))$ there exists $(x_i, y_i)\in D$ (say) such that $(x_i, y_i)\sim (e, b_i).$ So, there exists $(c_i, d_i)\in  V(\mathcal{G}_E^{**}(\mathbb{Z}_n\times Q_{2^k}))$ such that $(x_i, y_i), (e, b_i)\in \langle (c_i, d_i)\rangle.$ Therefore, $y_i, b_i\in \langle d_i\rangle.$ But $\text{o}(y_i)=2^m, m\geq 2.$  So, $b_i\in \langle y_i\rangle.$ Again, the number of distinct $4$-ordered subgroups in $Q_{2^k}$ is $2^{k-2}+1.$ As a result, $\gamma^{\text{strong}}(\mathcal{G}_E^{**}(\mathbb{Z}_n\times Q_{2^k}))\geq 2^{k-2}+1.$ This completes the proof. 	
\end{proof}
\begin{proof}[Proof of Theorem \ref{Thm: strong dom enhan of G1 and p grp nei cylic nor Q}]
Let $G$ be a $p$-group such that the graph $\mathcal{G}_{E}^{**}(G)$ has a total dominating set. Let $H_1=\langle a_1\rangle, \cdots, H_r=\langle a_r \rangle$ be the distinct cyclic subgroups of order $p.$ Let \[K_i=\{x\in G^{**}: a_i\in \langle x\rangle\}.\] Since $\mathcal{G}_{E}^{**}(G)$ has a total dominating set, then from Theorem \ref{Thm: existence of srong dom set in enhd pwr grph} it is clear that $|K_i|\geq 2,$ for each $i\in [r].$ Consider the set \[D=\{a_1, a_1', a_2, a_2', \cdots, a_r, a_r'\},\] where $H_i=\langle a_i\rangle$ and $a_i'\in K_i.$ Our claim is that $D$ is a total dominating set. Let $x\in G^{**}.$ Then $x\in K_i,$ for some $i\in [r].$ That is; $x\sim a_i.$ Again, for each $a_i\in D, a_i\sim a_i'.$ Therefore, $D$ is a total dominating set. So, $\gamma^{\text{strong}}(\mathcal{G}_{E}^{**} (G))\leq 2r.$ Also, by Proposition \ref{Prop: strong dom geq r1+1,G1 and g1 times cyclic, enhanced pwr} $\gamma^{\text{strong}}(\mathcal{G}_{E}^{**} (G))\geq 2r.$ Thus, $\gamma^{\text{strong}}(\mathcal{G}_{E}^{**} (G))=2r.$
	
\noindent
In this portion we consider that $G=G_1=P_1\times \cdots\times P_m, m\geq 2.$ By Proposition \ref{Prop: strong dom geq r1+1,G1 and g1 times cyclic, enhanced pwr}
	\begin{equation}\label{eqn:strong dom enh G1 geq r1+1}
		\gamma^{\text{strong}}(\mathcal{G}_{E}^{**} (G))\geq \left(\displaystyle \min _{ 1 \leq i \leq m}  r_i\right)+1.	
	\end{equation}	
	Moreover, using Lemma \ref{lemma:any_odtwo_commuting_elements_of_prime_order_adjacent} it can be shown that the set \[D'=\{(a_1, e_2, \cdots e_k), (a_2, e_2, \cdots, e_k), \cdots, (a_{r_1}, e_2, \cdots e_k), (e_1, x, e_3, \cdots, e_k)\}\] (where $x(\neq e_2)$ be an arbitrary element of the subgroup $P_2)$ is a total dominating set. Thus, 
	\begin{equation}\label{eqn: strong dom of G1 leq r1+1}
		\gamma^{\text{strong}}(\mathcal{G}_E^{**}(G_1))\leq \left(\displaystyle \min _{ 1 \leq i \leq m}  r_i\right)+1.	
	\end{equation}
	Hence, by Equations \eqref{eqn:strong dom enh G1 geq r1+1} and \eqref{eqn: strong dom of G1 leq r1+1} \[\gamma^{\text{strong}}(\mathcal{G}_E^{**}(G_1))=\left(\displaystyle \min _{ 1 \leq i \leq m}  r_i\right)+1.\]
\end{proof}

\begin{proof}[Proof of Theorem \ref{Thm: strong dom of enh pwr cylic sylow only}]
	If $G=\mathbb{Z}_n,$ then $\mathcal{G}_E^{**}(\mathbb{Z}_n)$ is an empty graph. So, vacuously \[\gamma^{\text{strong}}(\mathcal{G}_E^{**}(\mathbb{Z}_n))=0.\] Now, suppose that $G=G_1\times \mathbb{Z}_n,$ where $G_1=P_1\times\cdots \times P_m, m\geq 1$ and each $P_i$ is a $p_i$-group which is neither cyclic nor genralized quaternion, and $r_1\leq r_2\leq \cdots\leq r_m.$ Also, it is given that $\text{gcd}(|G_1|, n)=1.$ Let $H_1=\langle a_1\rangle, \cdots, H_{r_1}=\langle a_{r_1} \rangle,$ be the distinct cyclic subgroups of order $p_1.$ First suppose that $m=1.$ In this case, using Lemmas \ref{lemma:any_odtwo_commuting_elements_of_prime_order_adjacent} and \ref{lem: no of compo in G1(p grp) times cylic enh pwr graph}, one can prove that $\gamma^{\text{strong}}(\mathcal{G}_E^{**}(G_1\times \mathbb{Z}_n))=r_1+1.$ Now, suppose that $m\geq 2.$ We consider the set \[D=\{(a_1, e_2, \cdots, e_m, e'), \cdots, (a_{r_1}, e_2, \cdots, e_m, e'), (e_1, e_2, \cdots, e_m, g)\},\] where $e_i, e'$ are the identities of the groups $P_i, \mathbb{Z}_n$ respectively and $g$ is a generator of the group $\mathbb{Z}_n.$ Now, we prove that $D$ is a total dominating set. Let $(x_1, x_2, \cdots, x_m, y)$ be an arbitary element of $G^{**}\setminus D.$ Then we have the following cases:
	
	\noindent 
	\underline{Case 1: Let $(x_1, x_2, \cdots, x_m)\neq (e_1, e_2, \cdots, e_m)$ and $y=e'$:} 
	
	Suppose that $x_1\neq e_1.$ Here, we show that $(a_1, e_2, \cdots, e_m, e')\sim (x_1, x_2, \cdots, x_m, e').$ It is given that $|P_i|=p^{t_i}_i,$ for each $i\in [m].$ Since $\text{gcd}(p_1^{t_1}, p_2^{t_2}\cdots p_m^{t_m})=1,$ then $\exists$ $s_1, s_2\in \mathbb{Z}$ such that $s_1(p_1^{t_1})+s_2(p_2^{t_2}\cdots p_m^{t_m})=1.$ Now,
	\begin{align*}
		(x_1, x_2, \cdots, x_m, e')^{s_2(p_2^{t_2}\cdots p_m^{t_m})}&=(x_1^{s_2(p_2^{t_2}\cdots p_m^{t_m})}, e_2, \cdots, e_m, e')\\
		&=(x_1^{1-s_1(p_1^{t_1})}, e_2, \cdots, e_m, e')\\
		&=(x_1, e_2, \cdots, e_m, e')\\
		\Rightarrow	(x_1, e_2, \cdots, e_m, e')&\in \langle 	(x_1, x_2, \cdots, x_m, e')\rangle
	\end{align*}
	Again, $a_1\in \langle x_1\rangle$ (since, $x_1\neq e_1$ and $\text{o}(a_1)=p_1).$ Therefore, 
	\begin{align*}
		(a_1, e_2, \cdots, e_m, e')\in \langle (x_1, e_2, \cdots, e_m, e')\rangle
		&\Rightarrow  (a_1, e_2, \cdots, e_m, e')\in \langle (x_1, x_2, \cdots, x_m, e')\rangle\\
		&\Rightarrow (a_1, e_2, \cdots, e_m, e')\sim (x_1, x_2, \cdots, x_m, e').
	\end{align*}
	
	\noindent
	\underline{Case 2: Let $(x_1, x_2, \cdots, x_m)=(e_1, e_2, \cdots, e_m)$ and $y\neq e'$:} 
	
	In this case, it is eas to see that $(e_1, e_2, \cdots, e_m, y)\in \langle (e_1, e_2, \cdots, e_m, g)\rangle,$ (as $g$ is a generator of $\mathbb{Z}_n).$ Consequently, \[(e_1, e_2, \cdots, e_m, y)\sim (e_1, e_2, \cdots, e_m, g).\] 
	
	\noindent
	\underline{Case 3:  Let $(x_1, x_2, \cdots, x_m)\neq (e_1, e_2, \cdots, e_m)$ and $y\neq e'$:} 
	
	Here we prove that \[(x_1, x_2, \cdots, x_m, e'), (e_1, e_2, \cdots, e_m, y)\in \langle(x_1, x_2, \cdots, x_m, g)\rangle.  \text{ Clearly, }\] 	
	\begin{align*}\label{Eqn:strong dom case 3 G1 times cyclic}
		&(x_1, x_2, \cdots, x_m, g)^{p_1^{t_1}p_2^{t_2}\cdots p_m^{t_m}}=(e_1, e_2, \cdots, e_m, g^{p_1^{t_1}p_2^{t_2}\cdots p_m^{t_m}})\\
		&\Rightarrow (e_1, e_2, \cdots, e_m, g^{p_1^{t_1}p_2^{t_2}\cdots p_m^{t_m}})\in \langle (x_1, x_2, \cdots, x_m, g)\rangle.
	\end{align*}
	Now, $\text{gcd}(p_1^{t_1}p_2^{t_2}\cdots p_m^{t_m}, n)=1$ implies that $(e_1, e_2, \cdots, e_m, g^{p_1^{t_1}p_2^{t_2}\cdots p_m^{t_m}})$ is a generator of the cyclic group $\langle(e_1, e_2, \cdots, e_m, g) \rangle.$ So, $(e_1, e_2, \cdots, e_m, y)\in \langle (e_1, e_2, \cdots, e_m, g^{p_1^{t_1}p_2^{t_2}\cdots p_m^{t_m}})\rangle.$ Thus, \[(e_1, e_2, \cdots, e_m, y)\in \langle (x_1, x_2, \cdots, x_m, g)\rangle.\] 
	
	\noindent
	In this portion we show that $(x_1, x_2, \cdots, x_m, e')\in \langle(x_1, x_2, \cdots, x_m, g)\rangle.$ By the Euler's theorem, $n^{\phi(p_1^{t_1}p_2^{t_2}\cdots p_m^{t_m})}=\left(p_1^{t_1}p_2^{t_2}\cdots p_m^{t_m}\right)k+1, k\in \mathbb{N}$ (as $\text{gcd}(p_1^{t_1}p_2^{t_2}\cdots p_m^{t_m}, n)=1.$ Therefore,
	\begin{align*}
		(x_1, x_2, \cdots, x_m, g)^{n^{\phi(p_1^{t_1}p_2^{t_2}\cdots p_m^{t_m})}}&=((x_1, x_2, \cdots, x_m)^{n^{\phi(p_1^{t_1}p_2^{t_2}\cdots p_m^{t_m})}}, e')\\
		&=((x_1, x_2, \cdots, x_m)^{p_1^{t_1}p_2^{t_2}\cdots p_m^{t_m}k+1}, e')\\
		&=(x_1, x_2, \cdots, x_m, e').
	\end{align*}
	As a result, $(x_1, x_2, \cdots, x_m, e')\in \langle (x_1, x_2, \cdots, x_m, g) \rangle.$ Hence \[(x_1, x_2, \cdots, x_m, y)=(x_1, x_2, \cdots, x_m, e')(e_1, e_2, \cdots, e_m, y)\in \langle (x_1, x_2, \cdots, x_m, g)\rangle.\]
 It is necessary to demonstrate that if we select an element from the set $D$, there is another element in $D$ that is connected by an edge. To prove this, we can follow a similar approach as in Case 3. By doing so, we can establish that $(a_i, e_2, \cdots, e_m, e')\sim (e_1, e_2, \cdots, e_m, g)$ in $\mathcal{G}_E^{**}(G)$ for every $i\in [r].$ That is; $\gamma^{\text{strong}}(\mathcal{G}_{E}^{**} (G ))\leq r_1+1.$ Therefore,
	\begin{equation}\label{eqn:g cyclic times leq r1+1}
		\gamma^{\text{strong}}(\mathcal{G}_{E}^{**} (G ))\leq\left(\displaystyle \min _{ 1 \leq i \leq m}  r_i\right)+1.
	\end{equation}
	Again, by Proposition \ref{Prop: strong dom geq r1+1,G1 and g1 times cyclic, enhanced pwr} 
	\begin{equation}\label{eqn: g cyclic times geq r1+1}
		\gamma^{\text{strong}}(\mathcal{G}_{E}^{**} (G ))\geq\left(\displaystyle \min _{ 1 \leq i \leq m}  r_i\right)+1.
	\end{equation}
	Thus, by Equations \eqref{eqn:g cyclic times leq r1+1} and \eqref{eqn: g cyclic times geq r1+1} \[\gamma^{\text{strong}}(\mathcal{G}_{E}^{**}(G))=\left(\displaystyle \min _{ 1 \leq i \leq m}  r_i\right)+1.\]
\end{proof}	
In this portion, we will establish the validity of the case where the nilpotent group $G$ possesses a Sylow subgroup isomorphic a generalized quaternion. 

\begin{proof}[Proof of Theorem \ref{Thm: strong dom enhanced pwr nilpotent grp having Q2k}]
	First, let $G=Q_{2^k}.$ The group $Q_{2^k}$ has an element $x$ (say) such that $\text{o}(x)=2^{k-1}.$ Let $H=\langle x\rangle.$ Then $H$ is the only subgroup of $Q_{2^k}$ of order $2^{k-1}.$ Also, there are $2^{k-1}$ $4$-ordered elements in $Q_{2^{k}}\setminus H.$ Now, by Lemma \ref{lemm:conn and compo generalized qtrn}, the number of components in $\mathcal{G}_E^{**}(Q_{2^k})$ is $2^{k-2}+1$ which is equal to the number of distinct $4$-ordered subgroups of $Q_{2^k}.$ Therefore, $\gamma^{\text{strong}}(\mathcal{G}_E^{**}(G))\geq 2^{k-2}+1.$ Again it is clear that there is no edge between the vertices in $H$ and $Q_{2^n}\setminus H.$ Moreover, no two distinct $4$-ordered elements from two different $4$-ordered subgroups are edge connected. Therefore, to form a total dominating set we have to take at least two elements from each component in $\mathcal{G}_E^{**}(Q_{2^k}).$ It is possible as $\phi(4)=2.$ That is; $\gamma^{\text{strong}}(\mathcal{G}_E^{**}(G))\geq 2^{k-1}+2.$ 
	
	Now, we take \[D=\{a_1, a_1', a_2, a_2', \cdots, a_{2^{k-2}+1}, a'_{2^{k-2}+1}\},\] where for each $i\in [2^{k-2}+1], \text{o}(a_i)=4=\text{o}(a_i')$ and $\langle a_i\rangle=\langle a_i'\rangle$ but for $i\neq j, \langle a_i\rangle\neq \langle a_j\rangle.$ It is easy to see that $D$ is a total dominating set. Therefore, $\gamma^{\text{strong}}(\mathcal{G}_E^{**}(G))\leq 2^{k-1}+2.$ Hence, $\gamma^{\text{strong}}(\mathcal{G}_E^{**}(G))=2^{k-1}+2.$    
	
	Here we consider that $G=\mathbb{Z}_n\times Q_{2^k}$ and $\text{gcd}(n, 2)=1.$ Let $D=\{(x_1, y_1), \cdots, (x_t, y_t)\}$ be a total dominating set of $\mathcal{G}_E^{**}(\mathbb{Z}_n\times Q_{2^k}).$
	
	\noindent
	\underline{Claim 1: $t> 2^{k-2}+1:$}
	Let $T_1=\langle b_1\rangle, \cdots, T_{2^{k-2}+1}=\langle b_{2^{k-2}+1}\rangle$ be the distinct $4$-ordered cyclic subgroups of $Q_{2^k}.$ So for each $i\in [2^{k-2}+1], \exists$ $(x_i, y_i)\in D$ such that $(e', b_i)\sim (x_i, y_i).$ Therefore $b_i\in \langle y_i\rangle,$ since $\text{o}(y_i)=2^m, m\geq 2.$ So $t\geq 2^{k-2}+1.$
	Let \[D'=\{(x_i, y_i)\in D: b_i\in \langle y_i\rangle \text{ for each } i\in [2^{k-2}+1]\}.\] We will prove that $D'$ is an idependent subset of $D.$ Let $(x_i, y_i)\sim (x_j, y_j)$ for some $i, j\in [2^{k-2}+1].$ Then $\exists$ $(x, y)$ such that $(x_i, y_i), (x_j, y_j)\in \langle (x, y)\rangle.$ That is; $y_i, y_j\in \langle y\rangle.$ Let, $\text{o}(y_i)=2^{m_i}, \text{o}(y_j)=2^{m_j}.$ Then clearly $m_i, m_j\geq 2$ (otherwise, $(x_i, y_i), (x_j, y_j)\in\text{Dom}(\mathcal{G}_E(\mathbb{Z}_n\times Q_{2^k}))$ by Corollary \ref{Cor: Dom set for nilpotent grp}). Now, either $m_i\geq m_j$ or $m_j\geq m_i.$ Let $m_i\geq m_j.$ Then $y_i, y_j\in \langle y\rangle$ implies that $y_j\in \langle y_i\rangle.$ Similarly, $y_i\in \langle y_j\rangle$ if $m_j\geq m_i.$ Therefore, for each $(x_i, y_i)\in D$ we have to choose $(x_j, y_j)\in D$ such that either $y_j\in \langle y_i\rangle$ or $y_i\in \langle y_j\rangle.$ Again $b_j\in \langle y_j\rangle.$ So, $b_j\in \langle y_i\rangle,$ a contradiction, as $b_i\in \langle y_i\rangle, \text{o}(b_i)=\text{o}(b_j)=4$ but $\langle b_i\rangle\neq \langle b_j\rangle$ (similarly we get a contradiction if $y_i\in \langle y_j\rangle).$ That is, $D'$ is an independent subset of $D$ with $|D'|=2^{k-2}+1.$ Therefore, $t> 2^{k-2}+1.$
	
	\noindent
	\underline{Claim 2: $t\geq 2^{k-1}+2:$} Here we first prove that for any two distinct vertices $(x_i, y_i), (x_j, y_j)\in D',$ there is no vertex $(x_{\ell}, y_{\ell})\in D$ such that $(x_i, y_i)\sim (x_{\ell}, y_{\ell})\sim (x_j, y_j).$ Suppose that $(x_i, y_i)\sim (x_{\ell}, y_{\ell})\sim (x_j, y_j).$ Now, $(x_i, y_i)\sim (x_{\ell}, y_{\ell})$ implies that either $y_i\in \langle y_{\ell}\rangle$ or $y_{\ell}\in \langle y_{i}\rangle.$ Also, $(x_{\ell}, y_{\ell})\sim (x_j, y_j)$ implies that either $y_j\in \langle y_{\ell}\rangle$ or $y_{\ell}\in \langle y_{j}\rangle.$ So we have the following four cases.
	
	\noindent
	\underline{Case 1: Suppose $y_i\in \langle y_{\ell}\rangle$ and  $y_j\in \langle y_{\ell}\rangle$:} 

 \noindent
In this scenario, we can conclude that either $y_i\in \langle y_{j}\rangle$ or $y_j\in \langle y_{i}\rangle.$ If $y_i\in \langle y_{j}\rangle,$ then it follows that $b_i\in \langle y_{j}\rangle.$ However, this leads to a contradiction since $b_j\in \langle y_j\rangle, \text{o}(b_i)=\text{o}(b_j)=4,$ but $\langle b_i\rangle\neq \langle b_j\rangle.$ Similarly, if $y_j\in \langle y_i\rangle,$ we also arrive at a contradiction.
 
\noindent
	\underline{Case 2: Let's consider the situation where $y_i\in \langle y_{\ell}\rangle$ and $y_{\ell}\in \langle y_{j}\rangle$:}

 \noindent
 This implies that $y_i\in \langle y_{j}\rangle.$ However, this is not possible, as we have already established in the previous case.

	\noindent
	\underline{Case 3: Now, let's examine the case where $y_{\ell}\in \langle y_{i}\rangle$ and $y_j\in \langle y_{\ell}\rangle$:} 

 \noindent
 It is clear that $y_j\in \langle y_{i}\rangle.$ Similarly, we can deduce that this leads to a contradiction.
	
	\noindent
	\underline{Case 4: Finally, let's consider the scenario where $y_{\ell}\in \langle y_{i}\rangle$ and $y_{\ell}\in \langle y_{j}\rangle$:}

 \noindent
 Since, $\text{o}(y_{\ell})=2^m, m\geq 2,$ it can be deduced that there exists an element $b_s\in Q_{2^k}$ such that $\text{o}(b_s)=4$ and $b_s\in \langle y_{\ell}\rangle.$ As a result, $b_s\in \langle y_i\rangle$ and $b_s\in \langle y_j\rangle.$ Consequently, $b_i=b_s=b_j.$ This contradicts the fact that $b_i\neq b_j.$ Therefore, it can be concluded that $(x_i, y_i)$ and $(x_j, y_j)$ do not have a common neighbor in $D.$

Hence, for each $(x_i, y_i)\in D'$, it is necessary to select a unique $(x_i', y_i')$ such that $(x_i, y_i)\sim (x_i', y_i').$ This implies that
\begin{equation}\label{Eqn:strong dom enhanced cyclic times Q geq}
\gamma^{\text{strong}}(\mathcal{G}_E^{**}(\mathbb{Z}_n\times Q_{2^k}))\geq 2(2^{k-2}+1)=2^{k-1}+2.
\end{equation}

This concludes the proof of Claim 2.
	
	Now, we consider the following set: \[D''=\{(e, b_1), (e, b_1'), (e, b_2), (e, b_2'), \cdots, (e, b_{2^{k-2}+1}), (e, b'_{2^{k-2}+1})\},\] where for each $i\in [2^{k-2}+1],$ $T_i=\langle b_i\rangle=\langle b_i'\rangle$ is a $4$-ordered subgroup of $Q_{2^k}$ and $T_i\neq T_j.$  Clearly, for each $i\in [2^{k-2}+1]$ $(e, b_i)\sim (e, b_i').$ Let $(a, b)\in V(\mathcal{G}_E^{**}(\mathbb{Z}_n\times Q_{2^k}))\setminus D''.$ Then $\text{o}(b)=2^m, m\geq 2.$ So, $b_i\in \langle b\rangle,$ for some $i\in [2^{k-2}+1].$ Again $\text{gcd}(2^m, n)=1$ implies that $n^{\phi{(2^m)}}=2^mk+1$ (by Euler's Theorem). So,
	\begin{align*}
		(a, b)^{n^{\phi{(2^m)}}}=&(e, b^{n^{\phi{(2^m)}}})\\
		=&(e, b^{2^mk+1}\\
		=&(e, b).	
	\end{align*}    
	Therefore, $(e, b)\in \langle (a, b)\rangle.$ Also, $(e, b)^r=(e, b_i)$ (since, $b_i\in \langle b\rangle$ implies that $b_i=b^r,$ for some $r).$ Thus, $(e, b_i)\in \langle (a, b)\rangle.$ That is; $(a, b)\sim (e, b_i).$ Hence, $D''$ is a total dominating set. Consquently, 
	\begin{equation}\label{Eqn:strong dom enhanced cylic times Q leq}
		\gamma^{\text{strong}}(\mathcal{G}_E^{**}(\mathbb{Z}_n\times Q_{2^k}))\leq 2^{k-1}+2.
	\end{equation}
	So, by Equations \eqref{Eqn:strong dom enhanced cyclic times  Q geq} and \eqref{Eqn:strong dom enhanced cylic times Q leq} $\gamma^{\text{strong}}(\mathcal{G}_E^{**}(\mathbb{Z}_n\times Q_{2^k}))=2^{k-1}+2.$  
	
	In this part, we consider that $G=G_1\times Q_{2^{k}},$ where $G_1=P_1\times \cdots\times P_m.$ Using Proposition \ref{Prop: strong dom enhanced pwr nilpotent grp having Q2k} we can say that 
	\begin{equation}\label{Eqn: strong dom enh G1 times Q geq }
		\gamma^{\text{strong}}(\mathcal{G}_E^{**}(G_1\times Q_{2^k}))\geq \text{min}\{r_1, 2^{k-2}+1\}+1.
	\end{equation}
	In the second part, we consider the sets $D_1$ and $D_2$ as follows:
	\begin{align*}
		D_1=&\{(a_{11}, e_2, \cdots, e_m, e''),\cdots,  (a_{r_11}, e_2, \cdots, e_m, e''), (e_1, \cdots, e_m, b_i)\}\\
		D_2=&\{(e_1, \cdots, e_m, b_1),\cdots,  (e_1, \cdots, e_m, b_{2^{k-2}+1}), (a_{11}, \cdots, e_m, e'')\},
	\end{align*}
	where $\text{o}(b_i)=4, \langle b_i\rangle\neq \langle b_j\rangle,$ for all $i, j\in [2^{k-2}+1]$ and $\text{o}(a_{i1})=p_1, \langle a_{i1}\rangle\neq \langle a_{j1}\rangle,$ for all $i, j\in [r_1],$ and $e''$ is the identity of the group $Q_{2^k}.$ Now we take 
	\begin{equation*}
		D''=
		\begin{cases}
			D_1, & \text{ if  } \text{min}\{r_1, 2^{k-2}+1\}\\&=r_1 \\
			D_2, & \text{ if } \text{min}\{r_1, 2^{k-2}+1\}\\&=2^{k-2}+1.
		\end{cases}
	\end{equation*} 
	One can check that $D''$ is a total dominating set in both cases. Therefore, 
	\begin{equation}\label{Eqn: strong dom enh G1 times Q leq }
		\gamma^{\text{strong}}(\mathcal{G}_E^{**}(G_1\times Q_{2^k}))\geq \text{min}\{r_1, 2^{k-2}+1\}+1.
	\end{equation}
	Hence by Equations \ref{Eqn: strong dom enh G1 times Q geq } and \ref{Eqn: strong dom enh G1 times Q leq },  \[\gamma^{\text{strong}}(\mathcal{G}_E^{**}(G_1\times Q_{2^k}))=\text{min}\{r_1, 2^{k-2}+1\}+1.\]
	
	If $G=G_1\times \mathbb{Z}_n\times Q_{2^{k}},$ we can deduce from Proposition \ref{Prop: strong dom enhanced pwr nilpotent grp having Q2k}  that \[\gamma^{\text{strong}}(\mathcal{G}_E^{**}(G))\geq \text{min}\{r_1, 2^{k-2}+1\}+1.\] Moving on to the other part, let's consider the following sets:
	\begin{align*}
		D_1=&\{(a_{11}, e_2, \cdots, e_m, e', e''),\cdots,  (a_{r_11}, e_2, \cdots, e_m, e', e''), (e_1, \cdots, e_m, e', b_i)\}\\
		D_2=&\{(e_1, \cdots, e_m, e', b_1),\cdots,  (e_1, \cdots, e_m, e', b_{2^{k-2}+1}), (a_{11}, \cdots, e_m, e', e'')\},
	\end{align*}
	where $\text{o}(a_{i1})=p_1, \langle a_{i1}\rangle\neq \langle a_{j1}\rangle,$ for all $i, j\in [r_1],$ and $\text{o}(b_i)=4, \langle b_i\rangle\neq \langle b_j\rangle,$ for all $i, j\in [2^{k-2}+1]$ and $e'$ is the identity of the group $\mathbb{Z}_n.$ Let
	\begin{equation*}
		D''=	
		\begin{cases}
			D_1,&  \text{ if  } \text{min}\{r_1, 2^{k-2}+1\}\\&=r_1 \\
			D_2, & \text{ if } \text{min}\{r_1, 2^{k-2}+1\}\\&=2^{k-2}+1.
		\end{cases}
	\end{equation*} 
 It can be verified that $D''$ is a dominating set in both cases. Consequently, the theorem holds.
\end{proof}
\begin{proof}[Proof of Theorem \ref{Thm:dom enhanced pwr nilpotent grp having Q2k}]
	Proof of this theorem follows from the proof of Theorem \ref{Thm: strong dom enhanced pwr nilpotent grp having Q2k}.  
\end{proof}
\bibliographystyle{amsplain}
\bibliography{gen-inv-lcp.bib}
\end{document}